\newtheorem{thm}{Theorem}[section]
\newtheorem{cor}[thm] {Corollary}
\newtheorem{lem} [thm]{Lemma}
\newtheorem{defn}[thm]{Definition}
\raggedbottom \pagestyle{myheadings} \hbadness = 10000 \tolerance = 10000
\numberwithin{equation}{section}
\newcommand\diag{\operatorname{diag}}
\def\ni{\noindent}
\begin{document}
\label{'ubf'}
\setcounter{page}{1} 

\markboth {\hspace*{-9mm} \centerline{\footnotesize \sc
    On Two Laplacian Matrices for Skew Gain Graphs}
                 }
                { \centerline {\footnotesize \sc
                    Roshni T Roy Shahul Hameed K Germina K A  
              } \hspace*{-9mm}
               }
\begin{center}
{
       {\huge \textbf{On Two Laplacian Matrices \\ for Skew Gain Graphs
                               }
       }
\\

\medskip
Roshni T Roy \footnote{\small Department of Mathematics, Central University of Kerala, Kasaragod - 671316,\ Kerala,\ India.\ Email:roshnitroy@gmail.com}
Shahul Hameed K \footnote{\small  Department of Mathematics, K M M Government\ Women's\ College, Kannur - 670004,\ Kerala,  \ India.  E-mail: shabrennen@gmail.com} 
Germina K A \footnote{\small  Department of Mathematics, Central University of Kerala, Kasaragod - 671316,\ Kerala,\ India.\ Email: srgerminaka@gmail.com}

}
\end{center}
\newcommand\NEPS{\operatorname{NEPS}}
\thispagestyle{empty}
\begin{abstract}
\ni Let $G=(V,\overrightarrow{E})$ be a graph with some prescribed orientation for the edges and $\Gamma$ be an arbitrary group. If $f\in \mathrm{Inv}(\Gamma)$ be an anti-involution then the \emph{skew gain graph} $\Phi_f=(G,\Gamma,\varphi,f)$ is such that the \emph{skew gain function} $\varphi:\overrightarrow{E}\rightarrow \Gamma$ satisfies $\varphi(\overrightarrow{vu})=f(\varphi(\overrightarrow{uv}))$. In this paper, we study two different types, Laplacian and $g$-Laplacian matrices for a skew gain graph where the skew gains are taken from the  multiplicative group $F^\times$ of a field $F$ of characteristic zero. Defining incidence matrix, we also prove the matrix tree theorem for skew gain graphs in the case of the $g$-Laplacian matrix.

---------------------------------------------------------------------------------------------\\
\end{abstract}
\textbf{Key Words:} Graph, Adjacency matrix, Laplacian matrix, Incidence matix, Graph eigenvalues and Skew gain graphs.\\
\textbf{Mathematics Subject Classification (2010):} \ 05C22, 05C50, 05C76.
\section{Introduction}

Throughout this article, unless otherwise mentioned, by a graph we mean a finite, connected, simple graph and any terms which are not mentioned here, the reader may refer to \cite{fh}.\\
A gain graph is a graph with some orientation for the edges such that each edge has a gain, that is a label from a group so that reversing the direction of edge inverts the gain \cite{tz}. Generalizing the notion of gain graphs, skew gain graphs are defined such that gain of an edge (we call it as skew gain) is related to the skew gain of the reverse edge by an anti-involution \cite{jh}. The general expression for computing the coefficients of the characteristic polynomial of the adjacency matrix of skew gain graphs are studied in \cite{sh}. Laplacian matrix of a graph and matrix tree theorem are well studied by many which can be referred to for instance from \cite{rm}.  The matrix tree theorem for signed graph can be seen in Zaslavsky \cite{tz2} and on a more general setting in Chaiken \cite{sc}. In this paper, we define two different Laplacian matrices for skew gain graphs and prove matrix tree theorem for skew gain graphs.\\
Let $\Gamma$ be an arbitrary group. A function $f:\Gamma\rightarrow \Gamma$ is an \emph{involution} if $f(f(x))=x$ for all $x\in \Gamma$.  A function $f:\Gamma\rightarrow \Gamma$ is called an \emph{anti-homomorphism} if $f(xy)=f(y)f(x)$ for all $x,y\in\Gamma$. For an abelian group an anti-homomorphism is always a homomorphism. An involution $f:\Gamma\rightarrow \Gamma$ which is an anti-homomorphism is called an \emph{anti-involution}. We use $\mathrm{Inv}(\Gamma)$ to denote the set of all anti-involutions on $\Gamma.$ We define  $g:\Gamma\rightarrow \Gamma$ such that $g(x)=xf(x)$ for all $x\in \Gamma.$ 
\begin{defn}[\cite{jh}]
	\rm{Let $G=(V,\overrightarrow{E})$ be a graph with some prescribed orientation for the edges and $\Gamma$ be an arbitrary group. If $f\in \mathrm{Inv}(\Gamma)$ be an anti-involution then the \emph{skew-gain graph} $\Phi_f=(G,\Gamma,\varphi,f)$ is such that the \emph{skew gain function} $\varphi:\overrightarrow{E}\rightarrow \Gamma$ satisfies $\varphi(\overrightarrow{vu})=f(\varphi(\overrightarrow{uv}))$.}
\end{defn}
The adjacency matrix of a skew gain graph is defined when the skew gains are taken from the  multiplicative group $F^\times$ where $F$ is a field of characteristic zero. Here $f \in \mathrm{Inv}(\Gamma)$ is an involutive automorphism. We use the notation $u\sim v$ when the vertices $u$ and $v$ are adjacent and similar notation for the incidence of an edge on a vertex.
\begin{defn}[\cite{sh}]
	\rm{Given a skew gain graph $\Phi_f=(G,F^\times,\varphi,f)$ its adjacency matrix $A(\Phi_f)=(a_{ij})_n$ is defined as the square matrix of order $n=|V(G)|$ where \\
	$a_{ij} =
	\left\{
	\begin{array}{ll}
	\varphi(v_iv_j)  & \mbox{if } v_i\sim v_j \\
	0 & \mbox{otherwise }
	\end{array}
	\right.$ \\ such that whenever $a_{ij}\neq 0$,  
	$a_{ji}=f(a_{ij})$.} 
\end{defn}
In the following sections, we define Laplacian matrix and $g$-Laplacian matrix of skew gain graphs by defining the corresponding degree and $g$-degree matrices. We also define the incidence matrix of a skew gain graph.
 
\section{Laplacian matrix for skew gain graphs}

\begin{defn}
\rm{	Given a skew gain graph $\Phi_f=(G,F^\times,\varphi,f)$, the degree of the vertex  $v_i$, $d(v_i)$, is obtained by adding the multiplicative identity of the field $F^\times, d_i$ times, where $d_i$ is the degree of the vertex $v_i$ in the underlying graph $G.$\\
	Degree matrix $D(\Phi_f)$ can be defined as the diagonal matrix $(d_{ij})_n$ where $d_{ii} = d(v_i).$}	
\end{defn}

\begin{defn}
\rm{	Given a skew gain graph $\Phi_f=(G,F^\times,\varphi,f)$ its Laplacian matrix is defined as $L(\Phi_f)=D(\Phi_f) - A(\Phi_f).$ We define the Laplacian charactersitic polynomial of the skew gain graph $\Phi_f$ as $det(xI-L(\Phi_f)).$ The eigenvalues of the Laplacian matrix, counting the multiplicities, of a skew gain graph are called the Laplacian eigenvalues or Laplacian spectra of that skew gain graph.}	
\end{defn}

\begin{lem}\label{l1}\cite{nb}
   Let $A=(a_{i,j})$ be an $n \times n$ matrix. Then determinant of $A$ has the expansion 
   \begin{equation*}
     \det(A) = \sum sgn(\pi) a_{1,\pi (1)}a_{2,\pi (2)} \dots a_{n,\pi (n)}
   \end{equation*}
    where the summation is over all permutations $\pi$ on the set \{1,2,3, \dots, n\} and $sgn(\pi)$ is the sign of the permutation $\pi.$ If $\pi$ is an even cycle, then $sgn(\pi)= -1$ and if $\pi$ is an odd cycle, then $sgn(\pi)= +1.$ Thus the sign of an arbitrary permutation $\pi$ is $(-1)^{N_e}$, where $N_e$ is the number of even cycles in cyclic representation of $\pi.$ 
\end{lem}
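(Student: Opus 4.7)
The plan is to treat the lemma in three stages, corresponding to the three assertions it packs together: the Leibniz expansion, the sign of a single cycle, and the sign of a general permutation via its disjoint cycle decomposition.

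First I would establish the Leibniz formula. The cleanest route is to define $\det$ as the unique multilinear alternating functional on the rows of $A$ that takes the value $1$ on the identity matrix. Writing each row $A_i = \sum_{j} a_{i,j} e_j$ and expanding multilinearly gives $\det(A) = \sum_{j_1,\dots,j_n} a_{1,j_1}\cdots a_{n,j_n}\,\det(e_{j_1},\dots,e_{j_n})$, and the alternating property forces the inner determinant to vanish unless $(j_1,\dots,j_n)$ is a permutation $\pi$, in which case it equals $\mathrm{sgn}(\pi)$. (Alternatively one proves the expansion by induction on $n$ using cofactor expansion along the first row.)

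Next I would compute the sign of a single $k$-cycle. Writing $(i_1\,i_2\,\cdots\,i_k) = (i_1\,i_2)(i_1\,i_3)\cdots(i_1\,i_k)$ exhibits any $k$-cycle as a product of exactly $k-1$ transpositions, so its sign is $(-1)^{k-1}$. Consequently, a cycle of even length $k$ satisfies $\mathrm{sgn}=(-1)^{k-1}=-1$ and a cycle of odd length $k$ satisfies $\mathrm{sgn}=(-1)^{k-1}=+1$, matching the statement.

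Finally, since disjoint cycles commute and $\mathrm{sgn}$ is a homomorphism from $S_n$ to $\{\pm 1\}$, the sign of an arbitrary $\pi$ is the product of the signs of its disjoint cycles. Odd-length cycles contribute a factor $+1$ and play no role, while each even-length cycle contributes $-1$. Hence $\mathrm{sgn}(\pi) = (-1)^{N_e}$ where $N_e$ counts the even-length cycles in the disjoint cycle decomposition, including the fixed points is harmless since they are $1$-cycles (odd length).

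The main obstacle here is the usual subtlety in sign-of-a-permutation arguments: one must know that $\mathrm{sgn}$ is well-defined, i.e.\ that any two expressions of $\pi$ as a product of transpositions have the same parity. This is the single nontrivial step, and I would dispatch it by the standard Vandermonde argument, showing that the polynomial $\prod_{i<j}(x_i-x_j)$ changes sign under any transposition, so the parity of the number of transpositions in any decomposition of $\pi$ is forced and equals the parity of the number of inversions of $\pi$. Once this is in hand, the multiplicativity of $\mathrm{sgn}$ and the cycle computation above deliver the full statement.
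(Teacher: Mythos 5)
Your proof is correct and complete; note, however, that the paper offers no proof of this lemma at all --- it is quoted as a known result from Biggs's \emph{Algebraic Graph Theory} --- so there is nothing internal to compare against, and your argument (Leibniz expansion via multilinearity and the alternating property, sign of a $k$-cycle equal to $(-1)^{k-1}$ from a decomposition into $k-1$ transpositions, then multiplicativity of $\mathrm{sgn}$ over disjoint cycles, with well-definedness handled by the Vandermonde polynomial) is precisely the standard textbook proof of this statement. One small remark: the identity $(i_1\,i_2\,\cdots\,i_k)=(i_1\,i_2)(i_1\,i_3)\cdots(i_1\,i_k)$ as written depends on the composition convention, but since all that matters is that a $k$-cycle is some product of $k-1$ transpositions, the sign computation is unaffected.
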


Let $\mathfrak{L}(G)$ denotes set of all elementary subgraphs $L$ of $G$ (of all orders) and $K_e(L)$ denotes the number of components in $L$ having even order. Also let $\mathcal{M}(G)$ denotes the set of all matchings $M$ in the graph $G$ and $K(M)$ denotes the number of edges in $M.$ We denote the Laplacian characteristic polynomial of skew gain graph by $\chi(\Phi_f,x)= \det(xI-L(\Phi_f)).$

\begin{thm}\label{thmch}
	If $\Phi_f =(G,F^\times,\varphi,f)$ is a skew gain graph where $G=(V,\overrightarrow{E})$ is a graph of order $n,$	then 
	\begin{align*}
	\chi(\Phi_f,x)&= \displaystyle \prod_{v \in V(G)}(x-d(v)) + \\
      &\qquad	\displaystyle \sum_{L\in\mathfrak{L}(G)}(-1)^{K_e(L)} \displaystyle\prod_{K_2 \in L}g(\varphi(\overrightarrow{e}))\displaystyle \prod_{C \in L}(\varphi(C)+f(\varphi(C)))\displaystyle\prod_{v \notin V(L)}(x-d(v)).
	\end{align*}	
\end{thm}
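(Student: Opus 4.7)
The plan is to apply the permutation expansion of Lemma \ref{l1} to the matrix $xI-L(\Phi_f)$, whose $(i,i)$ entry is $x-d(v_i)$ and whose off-diagonal $(i,j)$ entry is $a_{ij}$, nonzero only when $v_i\sim v_j$, with $a_{ji}=f(a_{ij})$. For any permutation $\pi$ of $\{1,\ldots,n\}$, I decompose $\pi$ into disjoint cycles and examine what each cycle contributes to the product $\prod_i (xI-L(\Phi_f))_{i,\pi(i)}$.

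A fixed point $i$ of $\pi$ contributes the diagonal factor $x-d(v_i)$. A $2$-cycle $(i,j)$ contributes $a_{ij}a_{ji}=\varphi(\overrightarrow{e})f(\varphi(\overrightarrow{e}))=g(\varphi(\overrightarrow{e}))$ and is nonzero precisely when $v_iv_j=e\in E(G)$. A cycle of $\pi$ of length $k\ge 3$ supported on $v_{i_1},\ldots,v_{i_k}$ is nonzero only when these vertices form a cycle $C$ in $G$, and for each such $C$ exactly two permutation cycles arise, corresponding to the two rotational traversals. In one direction the product of skew gains is $\varphi(C):=a_{i_1i_2}a_{i_2i_3}\cdots a_{i_ki_1}$; in the reverse direction, using that $f$ is an anti-involution, the product equals
\[
f(a_{i_1i_2})f(a_{i_2i_3})\cdots f(a_{i_ki_1})=f\bigl(a_{i_1i_2}a_{i_2i_3}\cdots a_{i_ki_1}\bigr)=f(\varphi(C)).
\]
Hence the two traversals together contribute $\varphi(C)+f(\varphi(C))$.

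I then regroup the sum by associating to each nontrivial $\pi$ the elementary subgraph $L\in\mathfrak{L}(G)$ formed by the edges coming from $2$-cycles of $\pi$ together with the graph cycles coming from its longer cycles; the vertices outside $V(L)$ are exactly the fixed points of $\pi$. After fixing $L$, summing over all permutations that produce $L$ gives
\[
\prod_{K_2\in L}g(\varphi(\overrightarrow{e}))\prod_{C\in L}\bigl(\varphi(C)+f(\varphi(C))\bigr)\prod_{v\notin V(L)}(x-d(v)),
\]
weighted by the appropriate sign. The identity permutation ($L=\emptyset$) contributes the separate leading term $\prod_{v\in V(G)}(x-d(v))$.

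The final step is the sign computation: by Lemma \ref{l1}, $\mathrm{sgn}(\pi)=(-1)^{N_e}$ where $N_e$ counts even-length cycles in the cyclic decomposition of $\pi$. Every $2$-cycle of $\pi$ is even, and each graph cycle $C\subseteq L$ of length $k$ (in either traversal) appears as a single $k$-cycle of $\pi$ of the same parity; so every permutation yielding $L$ carries sign $(-1)^{K_e(L)}$, matching the definition of $K_e(L)$ as the number of even-order components of $L$. Assembling these pieces produces the displayed formula. The principal point requiring care is the cycle-orientation bookkeeping, and in particular the use of the anti-homomorphism identity $f(x)f(y)=f(yx)$ to verify that the reverse traversal of a graph cycle evaluates to $f(\varphi(C))$; once that is established, the rest is a clean regrouping of the permutation sum by elementary subgraph.
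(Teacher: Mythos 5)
Your argument is correct and is essentially the same as the paper's proof: both expand $\det(xI-L(\Phi_f))$ via the permutation formula of Lemma \ref{l1}, match fixed points with the factors $x-d(v_i)$, $2$-cycles with $a_{ij}a_{ji}=g(\varphi(\overrightarrow{e}))$, longer permutation cycles with graph cycles contributing $\varphi(C)+f(\varphi(C))$ from the two traversals, and compute the sign as $(-1)^{K_e(L)}$. Your write-up is in fact slightly more explicit than the paper's about the orientation bookkeeping (pairing the two rotational traversals and using the anti-involution, which on the abelian group $F^\times$ acts as a homomorphism, to get $f(\varphi(C))$), but the underlying approach is identical.
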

\begin{proof}
	Let $d_i$ denotes the degree of the vertex $v_i$ and let the adjacency matrix of skew gain graph $\Phi_f$ be
	 $A(\Phi_f)= \begin{pmatrix} 
		  0 & a_{1,2}  & a_{1,3} & \dots & a_{1,n}\\
		  a_{2,1} & 0 & a_{2,3} & \dots & a_{2,n}\\
		  \dots \\
		  a_{n,1} & a_{n,2} & a_{n,3} & \dots & 0
	\end{pmatrix}$.\\
    Then Laplacian characteristic polynomial of skew gain graph $\Phi_f$ is \begin{equation*}
        \chi(\Phi_f,x)= \det(xI-L(\Phi_f)) =
         \det \begin{pmatrix} 
    	    x-d_1 & a_{1,2} & a_{1,3} & \dots & a_{1,n}\\
    	    a_{2,1} & x-d_2 & a_{2,3} & \dots & a_{2,n}\\
    	    \dots \\
    	    a_{n,1} & a_{n,2} & a_{n,3} & \dots & x-d_n
          \end{pmatrix}.
    \end{equation*}
     Using Lemma \ref{l1}, corresponding to the identity permutation, we get the term $\displaystyle \prod_{v \in V(G)}(x-d(v)).$ Now, for any non-identity permutation $\pi,$ consider the term $sgn(\pi)a_{1,\pi (1)}a_{2,\pi (2)} \dots a_{n,\pi (n)}.$ Any permutation $\pi$ can be expressed as a product of disjoint cycles. Thus if $\pi$ fixes the $i^{th}$ element, $a_{i,i} = x-d(v_i).$ Now a cycle $(ij)$ of length two in $\pi$ corresponds to $a_{i,j}.a_{j,i}$ which corresponds to the edges $\overrightarrow{v_iv_j}$ and $\overrightarrow{v_jv_i}$ in $G.$ Any cycle $(pqr \dots t)$ of length greater than $2$ corresponds to $a_{p,q}a_{q,r} \dots a_{t,p}$ which gives a cycle $v_pv_qv_r \dots v_tv_p$ in $G.$ Thus, corresponding to the non-identity permutation $\pi$ we get an elementay subgraph $L$ of $G$ and $a_{1,\pi (1)}a_{2,\pi (2)} \dots a_{n,\pi (n)}$ becomes $\displaystyle\prod_{K_2 \in L}g(\varphi(\overrightarrow{e}))\displaystyle \prod_{C \in L}(\varphi(C)+f(\varphi(C))\displaystyle\prod_{v \notin V(L)}(x-d(v)).$\\
     Now, $sgn(\pi) = (-1)^{N_e},$ where $N_e$ is the number of even cycles in $\pi,$ which is same as the number of components in $L$ having even order.   
\end{proof}
When the underlying graph of $\Phi_f =(G,F^\times,\varphi,f)$ is a cycle or path, we call it as a skew gain cycle or skew gain path respectively.
\begin{cor}
	If $\Phi_f=(P_n,F^\times,\varphi,f)$ is a  skew gain path, then its Laplacian characteristic polynomial is
	\begin{equation*}
	\chi(\Phi_f, x)= (x-2)^{n-2}(x-1)^2+  \displaystyle\sum_{M\in \mathcal{M}(P_n)}(-1)^{K(M)}\prod_{\overrightarrow{e}\in M}g(\varphi(\overrightarrow{e}))\displaystyle \prod_{v \notin V(M)}(x-d(v)).
	\end{equation*}	
\end{cor}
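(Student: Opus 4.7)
The plan is to derive this corollary as a direct specialization of Theorem \ref{thmch} to the case when the underlying graph is the path $P_n$. The key observation is that $P_n$ is acyclic, so every elementary subgraph $L\in\mathfrak{L}(P_n)$ consists solely of $K_2$ components together with isolated vertices; there are no cycles $C$ of length $\ge 3$, which makes the product $\prod_{C\in L}(\varphi(C)+f(\varphi(C)))$ an empty product equal to $1$. Consequently, the elementary subgraphs of $P_n$ are in bijection with the matchings of $P_n$, and the sum over $\mathfrak{L}(P_n)$ in Theorem \ref{thmch} collapses to a sum over $\mathcal{M}(P_n)$.

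First I would write down the first term from Theorem \ref{thmch}, namely $\prod_{v\in V(P_n)}(x-d(v))$. Since $P_n$ has exactly two pendant vertices of degree $1$ and $n-2$ interior vertices of degree $2$, this product equals $(x-2)^{n-2}(x-1)^2$, which accounts for the first summand in the claim. Next, for a matching $M\in\mathcal{M}(P_n)$, each component of the associated elementary subgraph is either an isolated vertex (contributing to $\prod_{v\notin V(M)}(x-d(v))$) or a $K_2$, which has even order. Hence $K_e(L)=K(M)$, so the sign $(-1)^{K_e(L)}$ becomes $(-1)^{K(M)}$, matching the corollary.

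Finally, the edge-product $\prod_{K_2\in L}g(\varphi(\vec{e}))$ becomes $\prod_{\vec{e}\in M}g(\varphi(\vec{e}))$, and assembling these pieces yields the stated expression. The only step requiring a small justification is the acyclic argument; the rest is substitution and bookkeeping of vertex degrees, so I do not anticipate a genuine obstacle. If anything, the subtlety is making explicit the bijection between elementary subgraphs of a tree (here $P_n$) and matchings of $P_n$, together with the vanishing of the cycle factor under the convention that an empty product equals $1$.
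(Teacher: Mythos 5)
Your proposal is correct and is essentially the argument the paper intends: the corollary is a direct specialization of Theorem \ref{thmch}, using that $P_n$ is acyclic so every elementary subgraph is a matching (the cycle factor being an empty product), that $K_2$ components have even order so $(-1)^{K_e(L)}=(-1)^{K(M)}$, and that the degree product is $(x-2)^{n-2}(x-1)^2$. No gaps; this matches the paper's treatment, which states the corollary as an immediate consequence of the theorem.
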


\begin{cor}
	If $\Phi_f=(C_n,F^\times,\varphi,f)$ is a skew gain cycle, then its Laplacian characteristic polynomial is
	\begin{align*}
	\chi(\Phi_f, x)&= (x-2)^n +(-1)^{n-1} (\varphi(C_n) + f(\varphi(C_n))) + \\
	&\qquad \displaystyle\sum_{M\in \mathcal{M}(C_n)}(-1)^{K(M)}\prod_{\overrightarrow{e}\in M}g(\varphi(\overrightarrow{e}))\displaystyle \prod_{v \notin V(M)}(x-d(v)). 
	\end{align*}	
\end{cor}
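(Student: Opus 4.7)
The plan is to specialize Theorem \ref{thmch} to $G = C_n$ and carefully enumerate $\mathfrak{L}(C_n)$. Since every vertex of $C_n$ has degree $2$, the identity-permutation contribution $\prod_{v\in V(G)}(x-d(v))$ collapses to $(x-2)^n$, giving the first summand.

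Next I would classify the elementary subgraphs of $C_n$. Recall that any elementary subgraph is a vertex-disjoint union of $K_2$'s and cycles. In $C_n$ the only cycle subgraph is $C_n$ itself, and it exhausts $V(C_n)$. Hence every $L\in\mathfrak{L}(C_n)$ is either a matching $M\in\mathcal{M}(C_n)$ (no cycle component) or the whole cycle $L=C_n$.

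For the matching case, each $K_2$ is an even-order component, so $K_e(M)=K(M)$; the cycle product is empty; and the vertices outside $V(M)$ contribute $\prod_{v\notin V(M)}(x-d(v))$. Summed over $M\in\mathcal{M}(C_n)$, these terms reproduce the matching sum in the claim. For the case $L=C_n$, the $K_2$-product and the vertex product are both empty, and the cycle product equals $\varphi(C_n)+f(\varphi(C_n))$. The single component of $L$ has even order iff $n$ is even, so $(-1)^{K_e(L)}=(-1)^{n-1}$ in both parities (check: $n$ even gives $K_e=1$ and $(-1)^{n-1}=-1$; $n$ odd gives $K_e=0$ and $(-1)^{n-1}=1$). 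This produces the term $(-1)^{n-1}(\varphi(C_n)+f(\varphi(C_n)))$.

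Adding the three contributions yields the stated formula. There is no genuine obstacle here; the only point requiring a little care is the sign computation for the whole-cycle contribution, which I would explicitly verify by checking both parities of $n$ as above.
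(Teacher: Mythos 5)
Your proposal is correct and follows essentially the same route as the paper: specialize Theorem \ref{thmch} to $C_n$, note that the only elementary subgraph with a cycle component is $C_n$ itself (with the parity check giving the sign $(-1)^{n-1}$), and identify all remaining elementary subgraphs with matchings. No substantive difference from the paper's argument.
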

\begin{proof}
     The only elementary subgraph $L\in\mathfrak{L}(C_n)$ containing cycle as a component is $C_n$ itself. If $n$ is even then $(-1)^{K_e(L)} = -1 = (-1)^{n-1}$ and if $n$ is odd $(-1)^{K_e(L)} = (-1)^0 = 1 = (-1)^{n-1}.$ All other elementary subgraphs contains $K_2$ as components which can be considered as matchings in $C_n$ and hence using theorem \ref{thmch} we get
     \begin{align*}
     \chi(\Phi_f, x)&= (x-2)^n +(-1)^{n-1} (\varphi(C_n) + f(\varphi(C_n))) + \\
     &\qquad \displaystyle\sum_{M\in \mathcal{M}(C_n)}(-1)^{K(M)}\prod_{\overrightarrow{e}\in M}g(\varphi(\overrightarrow{e}))\displaystyle \prod_{v \notin V(M)}(x-d(v)).
     \end{align*} 
\end{proof}
\begin{cor}\label{str}
	If $\Phi_f=(K_{1,n},F^\times,\varphi,f)$ is a  skew gain graph with underlying graph as the star $K_{1,n}$, then its Laplacian characteristic polynomial is
	\begin{equation*}
	\chi(\Phi_f, x)= (x-1)^n(x-n) - (x-1)^{(n-1)} \displaystyle\sum_{\overrightarrow{e} \in E(K_{1,n})} g(\varphi(\overrightarrow{e})).
	\end{equation*}	
\end{cor}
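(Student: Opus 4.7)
The plan is to obtain this corollary as a direct specialization of Theorem \ref{thmch} to the case where the underlying graph is the star $K_{1,n}$.

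First I would record the degree data: the center $v_0$ has degree $n$, each of the $n$ leaves has degree $1$, so the ``identity permutation'' term contributes
\[
\prod_{v \in V(K_{1,n})}(x-d(v)) \;=\; (x-n)(x-1)^n.
\]

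Next I would enumerate the elementary subgraphs $L \in \mathfrak{L}(K_{1,n})$ that can arise in the sum. Since $K_{1,n}$ is a tree it contains no cycles, so every component of such an $L$ must be a $K_2$; that is, $L$ is a matching. Crucially, every edge of $K_{1,n}$ is incident to the central vertex, so any matching has size at most one. Hence the non-trivial elementary subgraphs are precisely the single directed edges $\overrightarrow{e}\in E(K_{1,n})$. For any such $L$ we have $K_e(L)=1$, so $(-1)^{K_e(L)}=-1$; the cycle product in Theorem \ref{thmch} is empty; the $K_2$-product contributes exactly $g(\varphi(\overrightarrow{e}))$; and the vertices outside $V(L)$ are the $n-1$ leaves not on $\overrightarrow{e}$, each of degree $1$, contributing the factor $(x-1)^{n-1}$.

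Summing these contributions over all edges $\overrightarrow{e}\in E(K_{1,n})$ and adding the identity-permutation term yields
\[
\chi(\Phi_f,x) \;=\; (x-1)^n(x-n) \;-\; (x-1)^{n-1}\sum_{\overrightarrow{e}\in E(K_{1,n})} g(\varphi(\overrightarrow{e})),
\]
which is the desired formula. There is essentially no analytic obstacle here; the only point requiring care is the combinatorial observation that the matchings in $K_{1,n}$ all have exactly one edge, so the general sum in Theorem \ref{thmch} collapses to a single sum indexed by the edges of the star.
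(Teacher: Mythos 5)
Your proof is correct and follows exactly the route the paper intends: the corollary is stated there without proof as a direct specialization of Theorem \ref{thmch}, and your observation that $K_{1,n}$ has no cycles and no matchings of size greater than one, so the only nontrivial elementary subgraphs are single edges (each contributing $-g(\varphi(\overrightarrow{e}))(x-1)^{n-1}$), is precisely the intended collapse of the general formula.
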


Now we move to the Laplacian spectra of some particular classes of skew gain graphs. 
\begin{thm}\label{reg}
	If $\Phi_f=(G,F^\times,\varphi,f)$ is a skew gain graph where $G$ is $d$-regular, then the Laplacian eigenvalues of $L(\Phi_f)$ are $d- \lambda$ where $\lambda$ is an eigenvalue of its adjacency matrix $A(\Phi_f).$
\end{thm}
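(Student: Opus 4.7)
The plan is to exploit the defining identity $L(\Phi_f) = D(\Phi_f) - A(\Phi_f)$ and observe that $d$-regularity collapses the degree matrix to a scalar multiple of the identity.

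First I would unpack the definition of $D(\Phi_f)$: since $G$ is $d$-regular, every vertex $v_i$ satisfies $d(v_i) = d$, where $d$ here denotes the element $\underbrace{1 + 1 + \cdots + 1}_{d \text{ times}}$ in $F$ (obtained by summing the multiplicative identity of $F^\times$, as prescribed in the definition of the degree of a vertex). Consequently $D(\Phi_f) = d \cdot I_n$.

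Next, substituting into the definition of the Laplacian gives
\begin{equation*}
L(\Phi_f) = D(\Phi_f) - A(\Phi_f) = dI_n - A(\Phi_f).
\end{equation*}
Now let $\lambda$ be any eigenvalue of $A(\Phi_f)$ with associated eigenvector $\mathbf{v} \neq 0$. Then
\begin{equation*}
L(\Phi_f)\mathbf{v} = dI_n \mathbf{v} - A(\Phi_f)\mathbf{v} = d\mathbf{v} - \lambda \mathbf{v} = (d-\lambda)\mathbf{v},
\end{equation*}
so $d-\lambda$ is an eigenvalue of $L(\Phi_f)$ with the same eigenvector. Conversely, any eigenvector of $L(\Phi_f)$ for eigenvalue $\mu$ is an eigenvector of $A(\Phi_f) = dI_n - L(\Phi_f)$ for eigenvalue $d-\mu$, so the correspondence $\lambda \leftrightarrow d - \lambda$ is a bijection that preserves multiplicities.

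There is really no substantive obstacle here; the only thing that needs care is ensuring that the convention for $d(v_i)$ in the skew gain setting really does produce the scalar matrix $dI_n$ when $G$ is regular, which follows directly from the definition of $D(\Phi_f)$ given earlier in the section.
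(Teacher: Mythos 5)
Your proposal is correct and follows essentially the same route as the paper: regularity gives $D(\Phi_f)=dI$, hence $L(\Phi_f)=dI-A(\Phi_f)$, from which the eigenvalue shift $\lambda\mapsto d-\lambda$ is immediate. The only difference is that you spell out the eigenvector computation and the multiplicity-preserving bijection, which the paper leaves implicit.
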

\begin{proof}
	If $d(v_i) = d,$ for all vertices $v_i$ in $\Phi_f=(G,F^\times,\varphi,f)$, then its Laplacian matrix is $L(\Phi_f)= dI - A(\Phi_f),$ which implies eigenvalues are  $d- \lambda$ where $\lambda$ is an eigenvalue of $A(\Phi_f).$
\end{proof}

We define for a matrix $B=(a_{ij})\in M_{m\times n}(F), B^{f}=(b_{ij})\in M_{m\times n}(F)$ where $f\in \mathrm{Inv}(F^{\times})$ as 
$b_{ij} =
\left\{
\begin{array}{ll}
f(a_{ij})  & \mbox{if } a_{ij}\neq 0 \\
0 & \mbox{otherwise. }
\end{array}
\right.$\\

\begin{thm}
    Let $\Phi_f =(G,F^\times,\varphi,f)$ be a skew gain graph where $G= K_{m,m}$ is a complete bipartite graph. Then the eigenvalues of $L(\Phi_f)$ are $m-\lambda$ such that $\lambda^2$ is an eigenvalue of $B(B^f)^T.$
\end{thm}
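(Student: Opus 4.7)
The plan is to exploit two facts. Since $K_{m,m}$ is $m$-regular, Theorem \ref{reg} immediately reduces the problem from Laplacian eigenvalues to adjacency eigenvalues: the Laplacian spectrum of $\Phi_f$ is $\{m-\lambda : \lambda \in \operatorname{spec}(A(\Phi_f))\}$. Hence it suffices to prove that $\operatorname{spec}(A(\Phi_f)) = \{\lambda : \lambda^2 \in \operatorname{spec}(B(B^f)^T)\}$.

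Next I would exploit the bipartite block structure. Label the two parts of $K_{m,m}$ as $V_1 = \{u_1,\dots,u_m\}$ and $V_2 = \{w_1,\dots,w_m\}$ and let $B \in M_m(F)$ be the matrix with $B_{ij} = \varphi(\overrightarrow{u_i w_j})$. Using the skew gain relation $\varphi(\overrightarrow{w_j u_i}) = f(\varphi(\overrightarrow{u_i w_j}))$ together with the definition of $B^f$ given just before the theorem, the $(w_j,u_i)$-entry of $A(\Phi_f)$ is $f(B_{ij})$, so the $(V_2,V_1)$-block of $A(\Phi_f)$ is $(B^f)^T$. Thus in block form
\begin{equation*}
A(\Phi_f) \;=\; \begin{pmatrix} 0 & B \\ (B^f)^T & 0 \end{pmatrix}.
\end{equation*}

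Now I would square this matrix to obtain
\begin{equation*}
A(\Phi_f)^2 \;=\; \begin{pmatrix} B(B^f)^T & 0 \\ 0 & (B^f)^T B \end{pmatrix}.
\end{equation*}
Because $B$ and $(B^f)^T$ are both square of the same order, the classical fact that $XY$ and $YX$ share the same characteristic polynomial applies, so $B(B^f)^T$ and $(B^f)^T B$ have identical spectra. Therefore $\operatorname{spec}(A(\Phi_f)^2) = \operatorname{spec}(B(B^f)^T)$ as sets, each with doubled multiplicity, and in particular $\lambda \in \operatorname{spec}(A(\Phi_f))$ implies $\lambda^2 \in \operatorname{spec}(B(B^f)^T)$.

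For the reverse direction — the step I view as the most delicate — I would use the standard bipartite symmetry of the spectrum: if $A(\Phi_f)\binom{v_1}{v_2} = \lambda\binom{v_1}{v_2}$, then $A(\Phi_f)\binom{v_1}{-v_2} = -\lambda\binom{v_1}{-v_2}$, so adjacency eigenvalues come in $\pm\lambda$ pairs. Concretely, from the block system $Bv_2 = \lambda v_1$ and $(B^f)^T v_1 = \lambda v_2$ one gets $B(B^f)^T v_1 = \lambda^2 v_1$ whenever $v_1 \neq 0$, which shows every nonzero $\mu \in \operatorname{spec}(B(B^f)^T)$ lifts to a pair $\pm\sqrt{\mu}$ in $\operatorname{spec}(A(\Phi_f))$; the zero eigenvalues are handled by a dimension count between the kernels of $B(B^f)^T$, $(B^f)^T B$, and $A(\Phi_f)$. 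Combining this characterization with the reduction from the first paragraph yields the claim.
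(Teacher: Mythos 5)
Your proposal is correct, and its skeleton is the same as the paper's: both arguments hinge on the observation that $K_{m,m}$ is $m$-regular, so Theorem \ref{reg} converts the Laplacian spectrum into $\{m-\lambda\}$ over the adjacency spectrum. The difference is in how the adjacency ingredient is handled: the paper simply cites the earlier work \cite{sh} for the fact that the adjacency eigenvalues of a skew gain $K_{m,m}$ are the $\lambda$ with $\lambda^2$ an eigenvalue of $B(B^f)^T$, whereas you re-derive it from scratch --- writing $A(\Phi_f)$ in the block form $\begin{pmatrix} 0 & B \\ (B^f)^T & 0\end{pmatrix}$, squaring to get $\diag\bigl(B(B^f)^T,\,(B^f)^TB\bigr)$, invoking the equality of spectra of $XY$ and $YX$, and using the $\pm\lambda$ bipartite pairing plus a kernel-dimension count for the converse. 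Your version is therefore more self-contained (and your treatment of the reverse inclusion, including the lift of a nonzero eigenvalue $\mu$ of $B(B^f)^T$ to $\pm\sqrt{\mu}$ via $v_2=\lambda^{-1}(B^f)^Tv_1$, is sound, with the understanding that eigenvalues are taken in the algebraic closure of $F$, as the paper also implicitly does), at the cost of reproving material the authors delegate to the cited characteristic-polynomial paper.
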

\begin{proof}
	The adjacency eigenvalues of $\Phi_f =(G,F^\times,\varphi,f),$ where $G= K_{m,m}$ is a complete bipartite graph, are $\lambda$ such that $\lambda^2$ is an eigenvalue of $B(B^f)^T$ \cite{sh}. Hence by Theorem \ref{reg}, since $\Phi_f$ is regular with degree $m,$ we get the eigenvalues of $L(\Phi_f)$ are $m-\lambda$ such that $\lambda^2$ is an eigenvalue of $B(B^f)^T.$
\end{proof}

\begin{thm}
	Let $\Phi_f =(G,F^\times,\varphi,f)$ be a skew gain graph where $G= K_{1,n}$ is a star of order $n+1.$ Then $\det(L(\Phi_f)) = n-\displaystyle\sum_{\overrightarrow{e} \in E(G)} g(\varphi(\overrightarrow{e})).$
\end{thm}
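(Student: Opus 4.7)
The plan is to obtain $\det(L(\Phi_f))$ by specializing the Laplacian characteristic polynomial of Corollary \ref{str} at $x=0$. Since the underlying graph $K_{1,n}$ has $n+1$ vertices, the matrix $L(\Phi_f)$ is of order $n+1$, and so
\[
\chi(\Phi_f,0) = \det(-L(\Phi_f)) = (-1)^{n+1}\det(L(\Phi_f)).
\]

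First I would substitute $x=0$ into the formula
\[
\chi(\Phi_f,x) = (x-1)^n(x-n) - (x-1)^{n-1}\sum_{\overrightarrow{e}\in E(K_{1,n})} g(\varphi(\overrightarrow{e}))
\]
supplied by Corollary \ref{str}. This yields
\[
\chi(\Phi_f,0) = (-1)^n(-n) - (-1)^{n-1}\sum_{\overrightarrow{e}} g(\varphi(\overrightarrow{e})) = (-1)^{n+1}\Bigl(n - \sum_{\overrightarrow{e}} g(\varphi(\overrightarrow{e}))\Bigr).
\]
Equating this with $(-1)^{n+1}\det(L(\Phi_f))$ and cancelling the common sign gives the claim.

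The only real obstacle is bookkeeping of the signs $(-1)^n,(-1)^{n-1},(-1)^{n+1}$; once those are aligned, the identity falls out. As a consistency check (and an alternative route that avoids Corollary \ref{str} altogether), I would label the centre of the star as $v_0$ and the leaves as $v_1,\dots,v_n$, write $\alpha_i = \varphi(\overrightarrow{v_0v_i})$, and observe that
\[
L(\Phi_f) = \begin{pmatrix} n & -\alpha^T \\ -f(\alpha) & I_n \end{pmatrix},
\]
where $f(\alpha) = (f(\alpha_1),\dots,f(\alpha_n))^T$. A one-line Schur-complement computation then gives
\[
\det(L(\Phi_f)) = n - \alpha^T f(\alpha) = n - \sum_{i=1}^{n}\alpha_i f(\alpha_i) = n - \sum_{\overrightarrow{e}\in E(G)} g(\varphi(\overrightarrow{e})),
\]
which agrees with the specialization argument and confirms the formula.
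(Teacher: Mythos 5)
Your main argument is exactly the paper's proof: substitute $x=0$ into Corollary \ref{str}, identify $\chi(\Phi_f,0)$ with $(-1)^{n+1}\det(L(\Phi_f))$, and cancel the sign, with your bookkeeping of $(-1)^n,(-1)^{n-1},(-1)^{n+1}$ correct. The added Schur-complement computation is a valid independent verification but not needed; the proposal is correct.
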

\begin{proof}
When we put $x=0$ in the characteristic polynomial of $L(\Phi_f)$, in Corollary \ref{str}, we get the constant term in the polynomial as $(-1)^{n-1} \big( n-\displaystyle\sum_{\overrightarrow{e} \in E(G)} g(\varphi(\overrightarrow{e})) \big),$ which is equal to  $(-1)^{n+1}\det(L(\Phi_f)).$ Hence $\det(L(\Phi_f)) = n-\displaystyle\sum_{\overrightarrow{e} \in E(G)} g(\varphi(\overrightarrow{e})).$ 
\end{proof}

\begin{thm}
	If $\Phi_f=(G,F^\times,\varphi,f)$ is a skew gain graph where $G= K_{1,n}$ is a star of order $n+1$, then the Laplacian spectrum of $\Phi_f$ is
	 $\begin{pmatrix}  \frac{n+1+\sqrt{(n+1)^2-4 \det(L(\Phi_f))}}{2} & \frac{n+1-\sqrt{(n+1)^2-4\det(L(\Phi_f))}}{2}  & 1\\
	1 & 1 & n-1
	\end{pmatrix}.$
\end{thm}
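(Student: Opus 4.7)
The plan is to read the eigenvalues directly off the factored form of the Laplacian characteristic polynomial furnished by Corollary \ref{str}, using the determinant identity established immediately above.

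First I would start from Corollary \ref{str}, which already gives
\[
\chi(\Phi_f,x) = (x-1)^n(x-n) - (x-1)^{n-1}\sum_{\overrightarrow{e}\in E(K_{1,n})} g(\varphi(\overrightarrow{e})).
\]
The key observation is that $(x-1)^{n-1}$ is a common factor, so one can write
\[
\chi(\Phi_f,x) = (x-1)^{n-1}\Bigl[(x-1)(x-n) - \sum_{\overrightarrow{e}\in E(K_{1,n})} g(\varphi(\overrightarrow{e}))\Bigr].
\]
Expanding $(x-1)(x-n) = x^2-(n+1)x+n$, the bracket becomes
\[
x^2 - (n+1)x + \Bigl(n - \sum_{\overrightarrow{e}\in E(K_{1,n})} g(\varphi(\overrightarrow{e}))\Bigr).
\]

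Second, I would invoke the preceding theorem, which says exactly that $\det(L(\Phi_f)) = n - \sum_{\overrightarrow{e}} g(\varphi(\overrightarrow{e}))$. Substituting this into the constant term yields the clean factorization
\[
\chi(\Phi_f,x) = (x-1)^{n-1}\bigl[x^2 - (n+1)x + \det(L(\Phi_f))\bigr].
\]
From here the quadratic formula hands over the two non-unit Laplacian eigenvalues, namely $\tfrac{n+1 \pm \sqrt{(n+1)^2 - 4\det(L(\Phi_f))}}{2}$, each with multiplicity one, while the factor $(x-1)^{n-1}$ accounts for the eigenvalue $1$ with multiplicity $n-1$. This is precisely the multiset displayed in the theorem.

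There is really no serious obstacle here; the entire argument is a short manipulation of the polynomial from Corollary \ref{str} together with the determinant formula. The only mild subtlety worth a brief comment is the degenerate case in which the quadratic coincidentally has $1$ as a root (i.e.\ $\det(L(\Phi_f)) = n$), in which case the multiplicity of $1$ would increase; but the statement as written lists the three eigenvalues generically, and the total count $1+1+(n-1)=n+1$ matches the order of $\Phi_f$, confirming the factorization is complete.
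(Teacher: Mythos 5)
Your proposal is correct and follows essentially the same route as the paper: factor $(x-1)^{n-1}$ out of the polynomial from Corollary \ref{str}, expand the remaining quadratic, identify its constant term with $\det(L(\Phi_f))$ via the preceding theorem, and read off the spectrum. The only difference is cosmetic — you make the substitution of the determinant explicit and note the degenerate case, which the paper leaves implicit.
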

\begin{proof}
	The Laplacian matrix of skew gain graph $\Phi_f =(G,F^\times,\varphi,f),$ where $G= K_{1,n},$ is
	 $L(\Phi_f)= \begin{pmatrix} 
	  n & l_{1,2}  & l_{1,3} & \dots & l_{1,n+1}\\
	  l_{2,1} & 1 & 0 & \dots & 0\\
	  \dots \\
	  l_{n+1,1} & 0 & 0 & \dots & 1
	\end{pmatrix}$. 
	By Corollary \ref{str}, Laplacian characteristic polynomial of $\Phi_f$ is 
	\begin{eqnarray*}
	\chi(\Phi_f, x)= (x-1)^{n-1}\big((x-n)(x-1) -  \displaystyle\sum_{\overrightarrow{e} \in E(G)} g(\varphi(\overrightarrow{e}))\big)\\
	= (x-1)^{n-1}\big((x^2 -(n+1)x + n -  \displaystyle\sum_{\overrightarrow{e} \in E(G)} g(\varphi(\overrightarrow{e}))\big) 
	\end{eqnarray*}
	From this the Laplacian spectrum of $\Phi_f$ becomes \\
	  $\begin{pmatrix}  \frac{n+1+\sqrt{(n+1)^2-4 \det(L(\Phi_f))}}{2} & \frac{n+1-\sqrt{(n+1)^2-4\det(L(\Phi_f))}}{2}  & 1\\
	 1 & 1 & n-1
	 \end{pmatrix}$.	
\end{proof}

\section{$g$-Laplacian matrix for skew gain graphs}

Now we define the $g$-Laplacian matrix of a skew gain graph as follows:\\
For an oriented edge $\overrightarrow{e_j}=\overrightarrow{v_iv_k}$ we take $v_i$ as the tail of that edge and $v_k$ as its head and we write $t(\overrightarrow{e_j})=v_i$ and $h(\overrightarrow{e_j})=v_k$.
\begin{defn}
	\rm{Given a skew gain graph $\Phi_f=(G,F^\times,\varphi,f)$ its $g$-Laplacian matrix is defined as $L_g(\Phi_f)=D_g(\Phi_f) - A(\Phi_f)$ where the diagonal matrix $D_g(\Phi_f)$ is $\diag\Big(\displaystyle\sum_{\overrightarrow{e}:v_i\sim \overrightarrow{e}}\sqrt{g(\varphi(\overrightarrow{e})}\Big)$ where $\sqrt{a}$ for $a\in F$ belongs to the algebraic closure of the field $F$.}
	The matrix $D_g(\Phi_f)$ is the $g$-degree matrix of $\Phi_f.$
\end{defn}
The incidence matrix for a skew gain graph $\Phi_f$ can be defined as follows 
\begin{defn}\label{incid}
	\rm{Given a skew gain graph $\Phi_f=(G,F^\times,\varphi,f)$ its (oriented) incidence matrix is defined as $\mathrm{H}(\Phi_f)=(b_{ij})$ where 
	$$b_{ij} =
	\begin{cases}
	g(\varphi(\overrightarrow{e_j}))  & \mbox{if } t(\overrightarrow{e_j})=v_i ,\\
	-f(\varphi(\overrightarrow{e_j}))\sqrt{g(\varphi(\overrightarrow{e_j}))}  & \mbox{if } h(\overrightarrow{e_j})=v_i ,\\
	0 & \mbox{otherwise. }
	\end{cases}
	$$ } 
\end{defn}

Clearly the definitions of Laplacian, $g$-Laplacian and incidence matrix of a skew gain graph coincide with the corresponding definitions for ordinary graphs, signed graphs and gain graphs which are extensively studied in \cite{nb,rm,shkg,tz2}. 
Now we define a matrix operation for the incidence matrix $\mathrm{H}(\Phi_f)$ as follows:\\
$\mathrm{H}^{\#}$ is the transpose of the matrix obtained by replacing each column element as under:\\
(i) $g(\varphi(\overrightarrow{e_j}))$ replaced by $ (\sqrt{g(\varphi(\overrightarrow{e_j}))})^{-1}$ and\\
(ii) $-f(\varphi(\overrightarrow{e_j}))\sqrt{g(\varphi(\overrightarrow{e_j}))}$ replaced by $-(f(\varphi(\overrightarrow{e_j})))^{-1}$

\begin{thm}\label{thm1}
   For a skew gain graph $\Phi_f=(G,F^\times,\varphi,f),$ $L_g(\Phi_f)=\mathrm{H}(\Phi_f)\mathrm{H}^{\#}(\Phi_f).$
\end{thm}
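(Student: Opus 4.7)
The strategy is to verify the identity entrywise by unpacking the definitions of $\mathrm{H}(\Phi_f)$ and $\mathrm{H}^{\#}(\Phi_f)$. Writing $\mathrm{H}(\Phi_f) = (b_{ij})$ as in Definition \ref{incid}, and letting $c_{ij}$ denote the $(i,j)$-entry of the matrix formed by the two replacements before transposition, we have $(\mathrm{H}(\Phi_f)\mathrm{H}^{\#}(\Phi_f))_{ik} = \sum_{j} b_{ij}\, c_{kj}$, where $j$ ranges over the oriented edges of $G$. Both factors vanish unless $\overrightarrow{e_j}$ is incident to both $v_i$ and $v_k$, so only a short list of edges contributes to any given entry.

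For the diagonal entries $(i = k)$, I would split the sum according to whether $v_i$ is the tail or the head of $\overrightarrow{e_j}$. If $t(\overrightarrow{e_j}) = v_i$, the summand is
\[
g(\varphi(\overrightarrow{e_j})) \cdot \bigl(\sqrt{g(\varphi(\overrightarrow{e_j}))}\bigr)^{-1} = \sqrt{g(\varphi(\overrightarrow{e_j}))},
\]
and if $h(\overrightarrow{e_j}) = v_i$, the two minus signs and the cancellation $f(\varphi(\overrightarrow{e_j}))\cdot(f(\varphi(\overrightarrow{e_j})))^{-1} = 1$ combine to give the same value $\sqrt{g(\varphi(\overrightarrow{e_j}))}$. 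Summing over edges incident to $v_i$ recovers exactly the $(i,i)$-entry of $D_g(\Phi_f)$.

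For off-diagonal entries $(i \neq k)$, only the edge $\overrightarrow{e_j}$ joining $v_i$ and $v_k$ (if one exists) contributes. If it is oriented from $v_i$ to $v_k$, then using $g(x) = x f(x)$,
\[
b_{ij}\, c_{kj} = g(\varphi(\overrightarrow{e_j})) \cdot \bigl(-(f(\varphi(\overrightarrow{e_j})))^{-1}\bigr) = -\varphi(\overrightarrow{e_j}) = -a_{ik};
\]
if instead the orientation is reversed, the analogous computation (with the roles of $b$ and $c$ interchanged) yields $-f(\varphi(\overrightarrow{e_j})) = -a_{ik}$ by the anti-involution relation $a_{ik} = f(a_{ki})$. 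Combining this with the diagonal calculation gives $\mathrm{H}(\Phi_f)\mathrm{H}^{\#}(\Phi_f) = D_g(\Phi_f) - A(\Phi_f) = L_g(\Phi_f)$.

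The calculation is essentially mechanical once the definitions are unwound; the only point requiring care is the bookkeeping of orientations, and this is precisely where the two distinct replacement rules in $\mathrm{H}^{\#}$, namely $(\sqrt{g(\varphi(\overrightarrow{e_j}))})^{-1}$ for tail entries and $-(f(\varphi(\overrightarrow{e_j})))^{-1}$ for head entries, are engineered so that the tail--head and head--tail subcases of each entry produce matching coefficients.
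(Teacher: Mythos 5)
Your proposal is correct and takes essentially the same route as the paper's own proof: an entrywise computation of $\mathrm{H}(\Phi_f)\mathrm{H}^{\#}(\Phi_f)$, splitting the diagonal case by whether $v_i$ is the tail or head of the incident edge and the off-diagonal case by the orientation of the joining edge, with the same cancellations $g(\varphi(\overrightarrow{e}))\bigl(\sqrt{g(\varphi(\overrightarrow{e}))}\bigr)^{-1}=\sqrt{g(\varphi(\overrightarrow{e}))}$ and $g(\varphi(\overrightarrow{e}))\bigl(f(\varphi(\overrightarrow{e}))\bigr)^{-1}=\varphi(\overrightarrow{e})$. Your identification of the reversed-orientation entry with $-a_{ik}$ via $a_{ik}=f(a_{ki})$ is exactly the point the paper makes when matching both cases with $L_g(\Phi_f)$.
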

\begin{proof} 
	Let $v_1,v_2, \dots , v_n$ and $\overrightarrow{e_1},\overrightarrow{e_2}, \dots ,\overrightarrow{e_m}$ be the vertices and edges in $G$, respectively. Denoting $\mathrm{H}(\Phi_f)$ by $\Big(\eta_{v_i\overrightarrow{e_j}}\Big)$ and $\mathrm{H}^{\#}(\Phi_f)$ by $\Big(\eta'_{\overrightarrow{e_i}v_j}\Big)$, let the $i^{th}$ row vector of $\mathrm{H}(\Phi_f)$ be $[\eta_{v_i\overrightarrow{e_1}}, \eta_{v_i\overrightarrow{e_2}},\dots ,\eta_{v_i\overrightarrow{e_m}}]$ and $j^{th}$ column of $\mathrm{H}^{\#}(\Phi_f)$ be $[\eta'_{\overrightarrow{e_1}v_J}, \eta'_{\overrightarrow{e_2}v_j},\dots ,\eta'_{\overrightarrow{e_m}v_j}]$. Now the $(i,j)^{th}$ entry of $\mathrm{H}\mathrm{H}^{\#}$ is $\displaystyle \sum_{k=1}^{m}\eta_{v_i\overrightarrow{e_k}}\eta'_{\overrightarrow{e_k}v_j}$.
	
	For $i=j$, $\eta_{v_i\overrightarrow{e_k}}\eta'_{\overrightarrow{e_k}v_j} \neq 0$ if and only if $\overrightarrow{e_k}$ is incident to $v_i$. If $t(\overrightarrow{e_k})=v_i$ then $\eta_{v_i\overrightarrow{e_k}}=g(\varphi(\overrightarrow{e_k}))$ in $\mathrm{H}(\Phi_f)$ and hence $\eta'_{\overrightarrow{e_k}v_j}=\sqrt{g(\varphi(\overrightarrow{e_k}))^{-1}}$ in $\mathrm{H}^{\#}(\Phi_f)$ so that $\eta_{v_i\overrightarrow{e_k}}\eta'_{\overrightarrow{e_k}v_j}= \sqrt{g(\varphi(\overrightarrow{e_k})}$ in $\mathrm{H}(\Phi_f)\mathrm{H}^{\#}(\Phi_f)$. If $h(\overrightarrow{e_k})=v_i$ then  $\eta_{v_i\overrightarrow{e_k}}=-f(\varphi(\overrightarrow{e_k}))\sqrt{g(\varphi(\overrightarrow{e_k}))}$ and hence  $\eta'_{\overrightarrow{e_k}v_j}=-f(\varphi(\overrightarrow{e_k}))^{-1}$ so that $\eta_{v_i\overrightarrow{e_k}}\eta'_{\overrightarrow{e_k}v_j}= \sqrt{g(\varphi(\overrightarrow{e_k})}$. Thus the diagonal entries in $\mathrm{H}(\Phi_f)\mathrm{H}^{\#}(\Phi_f)$ is $\displaystyle\sum_{\overrightarrow{e}:v_i\sim \overrightarrow{e}}\sqrt{(g(\varphi(\overrightarrow{e}))}.$
	
	For $i \neq j$, $\eta_{v_i\overrightarrow{e_k}}\eta'_{\overrightarrow{e_k}v_j}\neq 0$ if and only if $\overrightarrow{e_k}$ is an edge joining $v_i$ and $v_j$.
	If $\overrightarrow{e_k} = \overrightarrow{v_iv_j}$ then $\eta_{v_i\overrightarrow{e_k}}\eta'_{\overrightarrow{e_k}v_j} = g(\varphi(\overrightarrow{e_k})).(-f(\varphi(\overrightarrow{e_k}))^{-1})= - \varphi(\overrightarrow{e_k})$ and if $\overrightarrow{e_k} = \overrightarrow{v_jv_i}$ then $\eta_{v_i\overrightarrow{e_k}}\eta'_{\overrightarrow{e_k}v_j} = -f(\varphi(\overrightarrow{e_k}))\sqrt{g(\varphi(\overrightarrow{e_k}))}.\sqrt{g(\varphi(\overrightarrow{e_k}))^{-1}} = - f(\varphi(\overrightarrow{e_k}))$.\\
	In both cases, the $(i,j)^{th}$ entry of $\mathrm{H}(\Phi_f)\mathrm{H}^{\#}(\Phi_f)$ coincides with the $(i,j)^{th}$ entry of $L_g(\Phi_f)$ and hence the proof.
\end{proof}

From the definition~\ref{incid}, we will have the following deductions:\\
(i) In the case of real weighted graphs where $f(x)=x$ so that $g(x)=x^2$ ( which is a particular skew gain graphs which we can be used to deal with weighted signed graphs also), the incidence matrix $\mathrm{H}=(b_{ij})$ has
$$b_{ij} =\begin{cases}
w(\overrightarrow{e_j})^2  & \mbox{if } t(\overrightarrow{e_j})=v_i \\
-w(\overrightarrow{e_j})^2  & \mbox{if } h(\overrightarrow{e_j})=v_i ,\\
0 & \mbox{otherwise. }
\end{cases}
$$
(ii) In the case of complex skew gain graph with $f(z)=\overline{z}$, so that $g(z)=|z|^2$, the incidence matrix $\mathrm{H}=(b_{ij})$ has
$$b_{ij} =\begin{cases}
|w(\overrightarrow{e_j})|^2  & \mbox{if } t(\overrightarrow{e_j})=v_i ,\\
-\overline{w(\overrightarrow{e_j})}|w(\overrightarrow{e_j})|  & \mbox{if } h(\overrightarrow{e_j})=v_i ,\\
0 & \mbox{otherwise. }
\end{cases}
$$

\begin{lem}\label{lem1}\cite{jgb}
	Let $A$ be an $m \times n$ matrix and  $B$ be an $n \times k$ matrix. Then $rank(AB)  \leq min\{ rank(A), rank(B)\}.$ Also $rank(A) \leq min\{m,n\}.$
\end{lem}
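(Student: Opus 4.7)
The plan is to prove the two inequalities separately, each by a standard column-space (or row-space) argument, relying on the fundamental fact that the column rank of a matrix equals its row rank.

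First I would dispose of the easy second assertion. Since the column space of $A$ is a subspace of $F^m$ and is spanned by the $n$ columns of $A$, its dimension is at most $\min\{m,n\}$; because column rank equals row rank, this is precisely $\mathrm{rank}(A)\le \min\{m,n\}$.

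For the main inequality $\mathrm{rank}(AB)\le \min\{\mathrm{rank}(A),\mathrm{rank}(B)\}$, I would handle the two bounds through dual viewpoints. To bound by $\mathrm{rank}(A)$, observe that the $j$-th column of $AB$ equals $A\,B_{\cdot j}$, where $B_{\cdot j}$ is the $j$-th column of $B$; hence every column of $AB$ lies in the column space of $A$, giving the containment of column spaces
\begin{equation*}
\mathrm{col}(AB)\subseteq \mathrm{col}(A),
\end{equation*}
and so $\mathrm{rank}(AB)=\dim\mathrm{col}(AB)\le \dim\mathrm{col}(A)=\mathrm{rank}(A)$. Symmetrically, the $i$-th row of $AB$ is $A_{i\cdot}\,B$, a linear combination of the rows of $B$, so $\mathrm{row}(AB)\subseteq \mathrm{row}(B)$ and $\mathrm{rank}(AB)\le \mathrm{rank}(B)$. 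Taking the minimum gives the desired bound.

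There is no serious obstacle here; the only conceptual point to flag is the equality of row rank and column rank, which is what allows the two containments of subspaces (one in $F^m$, one in $F^k$) to be combined into a single numerical statement about $\mathrm{rank}(AB)$. If one wanted a proof purely from the definition $\mathrm{rank}=\dim\mathrm{col}$, the bound $\mathrm{rank}(AB)\le \mathrm{rank}(B)$ could alternatively be obtained by noting that the linear map $x\mapsto Bx$ has image of dimension $\mathrm{rank}(B)$, and $A$ composed with this map can only decrease the dimension of the image, yielding $\mathrm{rank}(AB)\le \mathrm{rank}(B)$ directly.
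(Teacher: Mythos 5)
Your proof is correct: the column-space containment $\mathrm{col}(AB)\subseteq\mathrm{col}(A)$, the row-space containment $\mathrm{row}(AB)\subseteq\mathrm{row}(B)$ together with the equality of row and column rank, and the spanning argument for $\mathrm{rank}(A)\le\min\{m,n\}$ are exactly the standard justifications. Note that the paper itself offers no proof of this lemma — it is quoted as a known fact from the linear algebra reference of Broida and Williamson — so your argument simply supplies the routine textbook verification that the citation stands in for; there is nothing to reconcile and no gap to report.
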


\begin{lem}\label{lem2}\cite{jgb}
	If $A \in M_n(F)$ is a block triangular matrix of the form\\
	$A= \begin{pmatrix} 
	A_{11} & A_{12}  & \dots & A_{1k}\\
	0 & A_{22} &  \dots & A_{2k}\\
	\dots &\dots & \dots\\
	0 & 0 & \dots & A_{kk}
	\end{pmatrix}$ where each $A_{ii}$ is a square matrix and the $0’s$ are zero matrices of appropriate
	size, then $\det(A) = \displaystyle \prod_{i=1}^k \det(A_{ii})$.
\end{lem}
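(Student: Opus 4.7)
The plan is to prove the statement by induction on the number of diagonal blocks $k$, with the $k=2$ case doing essentially all the work. The base case $k=1$ is vacuous. For the inductive step it suffices to establish the identity for a $2 \times 2$ block triangular matrix $\begin{pmatrix} A_{11} & A_{12} \\ 0 & A_{22} \end{pmatrix}$, and then apply the hypothesis to the bottom-right block, which is itself block triangular with $k-1$ diagonal blocks.

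To handle the two-block case I would use the Leibniz expansion of the determinant (Lemma \ref{l1}). Write $n = n_1 + n_2$ and label rows and columns so that indices $\{1,\dots,n_1\}$ correspond to the first block row and $\{n_1+1,\dots,n\}$ to the second. For any permutation $\pi$ of $\{1,\dots,n\}$, the product $a_{1,\pi(1)} a_{2,\pi(2)} \cdots a_{n,\pi(n)}$ vanishes whenever some $i > n_1$ has $\pi(i) \le n_1$, since then $a_{i,\pi(i)}$ lies in the zero block. So the only permutations contributing to the Leibniz sum are those with $\pi(\{n_1+1,\dots,n\}) \subseteq \{n_1+1,\dots,n\}$; by bijectivity of $\pi$ this forces $\pi(\{1,\dots,n_1\}) = \{1,\dots,n_1\}$ as well. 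Thus every contributing $\pi$ decomposes uniquely as $\pi = \pi_1 \sqcup \pi_2$ with $\pi_1 \in S_{n_1}$ and $\pi_2 \in S_{n_2}$.

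Since $\pi_1$ and $\pi_2$ act on disjoint sets, their cycle decompositions are disjoint and $\mathrm{sgn}(\pi) = \mathrm{sgn}(\pi_1)\,\mathrm{sgn}(\pi_2)$. Moreover the product $\prod_i a_{i,\pi(i)}$ factors as $\bigl(\prod_{i \le n_1} (A_{11})_{i,\pi_1(i)}\bigr)\bigl(\prod_{i > n_1} (A_{22})_{i-n_1,\pi_2(i)-n_1}\bigr)$. Separating the Leibniz sum over $\pi$ into a double sum over $(\pi_1,\pi_2) \in S_{n_1} \times S_{n_2}$, Fubini-style factorization gives $\det(A) = \det(A_{11})\det(A_{22})$.

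The only real obstacle is bookkeeping: justifying that the sign decomposes multiplicatively and that the row/column indices on the second block translate correctly to indices on $A_{22}$. Both are standard and essentially immediate from the fact that $\pi_1$ and $\pi_2$ act on complementary invariant subsets, so I expect no substantive difficulty. Combining the two-block identity with the inductive hypothesis applied to the $(k-1)$-block matrix formed by $A_{22},\ldots,A_{kk}$ yields $\det(A) = \prod_{i=1}^k \det(A_{ii})$.
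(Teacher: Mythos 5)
Your argument is correct: the vanishing of every Leibniz term in which some index $i>n_1$ has $\pi(i)\le n_1$, the forced invariance $\pi(\{1,\dots,n_1\})=\{1,\dots,n_1\}$, the multiplicativity of the sign over permutations with disjoint supports, and the factorization of the sum over $S_{n_1}\times S_{n_2}$ are all sound, and the induction on $k$ then closes the general case. Note, however, that the paper does not prove this lemma at all --- it is quoted as a known fact from the cited linear algebra text \cite{jgb} and used as a black box in Theorems \ref{unicyclic} and \ref{forest} --- so there is no internal proof to compare against; what you have written is the standard textbook argument (two-block case via the Leibniz expansion, then induction on the number of diagonal blocks), and it is a perfectly adequate self-contained justification of the statement as used in the paper.
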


\begin{thm}\label{tree}
	If $\Phi_f =(G,F^\times,\varphi,f)$ is a skew gain graph, where $G$ is a tree of order $n,$ then $\det L_g(\Phi_f) =0.$
\end{thm}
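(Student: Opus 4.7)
The plan is to invoke the factorization $L_g(\Phi_f) = \mathrm{H}(\Phi_f)\mathrm{H}^{\#}(\Phi_f)$ proved in Theorem \ref{thm1} and then exploit the rank deficiency that comes from the tree having only $n-1$ edges.

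Concretely, I would first note that since $G$ is a tree on $n$ vertices, its edge set $\overrightarrow{E}$ has cardinality $m = n-1$. Consequently, by the definitions of $\mathrm{H}(\Phi_f)$ and $\mathrm{H}^{\#}(\Phi_f)$, the incidence matrix $\mathrm{H}(\Phi_f)$ is of size $n \times (n-1)$ and $\mathrm{H}^{\#}(\Phi_f)$ is of size $(n-1) \times n$. Applying the bound $\mathrm{rank}(A) \leq \min\{m,n\}$ from Lemma \ref{lem1}, both matrices have rank at most $n-1$.

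Next, applying the product inequality $\mathrm{rank}(AB) \leq \min\{\mathrm{rank}(A), \mathrm{rank}(B)\}$ from Lemma \ref{lem1} to $A = \mathrm{H}(\Phi_f)$ and $B = \mathrm{H}^{\#}(\Phi_f)$, we obtain
\begin{equation*}
\mathrm{rank}(L_g(\Phi_f)) = \mathrm{rank}(\mathrm{H}(\Phi_f)\mathrm{H}^{\#}(\Phi_f)) \leq n-1.
\end{equation*}
Since $L_g(\Phi_f)$ is an $n \times n$ matrix whose rank is strictly less than $n$, it is singular, and therefore $\det L_g(\Phi_f) = 0$.

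There is essentially no obstacle here; the only subtlety is that the proof relies crucially on the factorization theorem (Theorem \ref{thm1}) already being in place. I do not expect to need any structural properties of the tree beyond the edge count, and the field-theoretic considerations involving $f$, $g$, and the square roots are absorbed into the incidence matrix machinery that has already been set up.
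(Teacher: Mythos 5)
Your argument is correct and is essentially identical to the paper's own proof: both use the factorization $L_g(\Phi_f)=\mathrm{H}(\Phi_f)\mathrm{H}^{\#}(\Phi_f)$ from Theorem \ref{thm1} together with the rank bounds of Lemma \ref{lem1} applied to the $n\times(n-1)$ incidence matrix of a tree to conclude that $L_g(\Phi_f)$ is singular. No gap to report.
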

\begin{proof}
	A tree on $n$ vertices have $n-1$ edges. Thus the incidence matrix $H(\Phi_f)$ has order $n \times n-1$. Now, by Lemma \ref{lem1}, rank($\mathrm{H}(\Phi_f)\mathrm{H}^{\#}(\Phi_f)$) is less than or equal to $n-1$ which implies $\det (\mathrm{H}(\Phi_f)\mathrm{H}^{\#}(\Phi_f)) =0.$ Thus by Theorem \ref{thm1},
	\begin{equation*}
	\det(L_g(\Phi_f)) = \det(\mathrm{H}(\Phi_f)\mathrm{H}^{\#}(\Phi_f)) = 0.
	\end{equation*}
\end{proof}

\begin{thm}\label{cycle}
	If $\Phi_f =(C_n,F^\times,\varphi,f)$ is a skew gain cycle then $\det L_g(\Phi_f) = 2\sqrt{\displaystyle \prod_{\overrightarrow{e} \in E(C_n)} g(\varphi(\overrightarrow{e}))} -[\varphi(C_n) + f(\varphi(C_n))].$
\end{thm}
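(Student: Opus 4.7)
The plan is to leverage Theorem \ref{thm1}, which factors the $g$-Laplacian as $L_g(\Phi_f)=\mathrm{H}(\Phi_f)\mathrm{H}^{\#}(\Phi_f)$. Since the cycle $C_n$ has exactly $n$ edges as well as $n$ vertices, both $\mathrm{H}(\Phi_f)$ and $\mathrm{H}^{\#}(\Phi_f)$ are square of order $n$, so that $\det L_g(\Phi_f)=\det\mathrm{H}(\Phi_f)\cdot\det\mathrm{H}^{\#}(\Phi_f)$. This is the key reduction that lets me avoid expanding $\det L_g$ directly via Lemma \ref{l1}.

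To set up notation, I label the vertices $v_1,\dots,v_n$ consecutively around the cycle and orient the edges as $\overrightarrow{e_i}=\overrightarrow{v_iv_{i+1}}$ (indices mod $n$), writing $\alpha_i=\varphi(\overrightarrow{e_i})$, $\beta_i=f(\alpha_i)$, and $s_i=\sqrt{g(\varphi(\overrightarrow{e_i}))}=\sqrt{\alpha_i\beta_i}$. Applying Definition \ref{incid}, $\mathrm{H}(\Phi_f)$ is the matrix whose diagonal entries are $s_i^2$, whose sub-diagonal entries are $-\beta_i s_i$, and whose only other nonzero entry is $-\beta_n s_n$ in position $(1,n)$. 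Similarly, $\mathrm{H}^{\#}(\Phi_f)$ has diagonal $s_i^{-1}$, super-diagonal $-\beta_i^{-1}$, and a single corner entry $-\beta_n^{-1}$ in position $(n,1)$.

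Each of these is bidiagonal with one corner entry, so expansion along the first row of $\mathrm{H}(\Phi_f)$ and along the last row of $\mathrm{H}^{\#}(\Phi_f)$ leaves only two nonzero terms apiece, coming from the identity permutation and the full cyclic permutation. Once the cofactor sign $(-1)^{n+1}$ and the lower-triangular-minor sign $(-1)^{n-1}$ are tracked together, I expect to obtain
\begin{align*}
\det\mathrm{H}(\Phi_f)&=\prod_{i=1}^{n}s_i^{2}\;-\;\prod_{i=1}^{n}\beta_is_i,\\
\det\mathrm{H}^{\#}(\Phi_f)&=\prod_{i=1}^{n}s_i^{-1}\;-\;\prod_{i=1}^{n}\beta_i^{-1}.
\end{align*}

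To finish, I multiply the two determinants and collect the four resulting terms. Two of them reduce immediately to $\prod s_i$, yielding the leading $2\prod s_i=2\sqrt{\prod_{\overrightarrow{e}\in E(C_n)} g(\varphi(\overrightarrow{e}))}$. The remaining two simplify using $s_i^2=\alpha_i\beta_i$, so that $\prod s_i^2/\prod\beta_i=\prod\alpha_i=\varphi(C_n)$, while $\prod\beta_i=f(\varphi(C_n))$ because $f$ is an anti-homomorphism and $F^{\times}$ is commutative. The one place where care is genuinely required is the sign bookkeeping in the two corner-expansions (in particular, verifying that the net contribution of the wrap-around entry is $(-1)^{2n+1}=-1$ in each determinant); beyond that, the rest is routine arithmetic in $F$.
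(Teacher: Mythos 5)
Your proposal is correct and follows essentially the same route as the paper: factor $L_g(\Phi_f)=\mathrm{H}(\Phi_f)\mathrm{H}^{\#}(\Phi_f)$ via Theorem \ref{thm1}, evaluate $\det\mathrm{H}$ and $\det\mathrm{H}^{\#}$ by a single cofactor expansion of the bidiagonal-plus-corner matrices (your sign bookkeeping $(-1)^{2n+1}=-1$ matches the paper's $(-1)^n\cdot(-1)^{n-1}$), and multiply, using commutativity of $F^{\times}$ to identify $\prod\beta_i$ with $f(\varphi(C_n))$. No gaps worth noting.
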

\begin{proof}
	Let the skew gain cycle be $C_n = v_1\overrightarrow{e_1}v_2\overrightarrow{e_2}v_3\overrightarrow{e_3} \dots v_{n-1}\overrightarrow{e_{n-1}}v_n\overrightarrow{e_n}v_1$. Its incidence matrix $\mathrm{H}$ is \\
	
	$ \begin{pmatrix} 
	g(\varphi(\overrightarrow{e_1}))  & 0  &  \dots & 0 & -f(\varphi(\overrightarrow{e_n}))\sqrt{g(\varphi(\overrightarrow{e_n}))}\\
	-f(\varphi(\overrightarrow{e_1}))\sqrt{g(\varphi(\overrightarrow{e_1}))} & g(\varphi(\overrightarrow{e_2})) & \dots & 0 & 0\\
	\dots &\dots & \dots &\dots &\dots\\
	0 & 0 & \dots & g(\varphi(\overrightarrow{e_{n-1}})) & 0 \\
	0 & 0 & \dots & -f(\varphi(\overrightarrow{e_{n-1}}))\sqrt{g(\varphi(\overrightarrow{e_{n-1}}))} & g(\varphi(\overrightarrow{e_n}))
	\end{pmatrix}$. \\\\
	Expanding along the first row to find the determinant of $\mathrm{H}$, we get \\
	$ \det (\mathrm{H}) = g(\varphi(\overrightarrow{e_1})) M_{1,1}  + (-1)^n f(\varphi(\overrightarrow{e_n}))\sqrt{g(\varphi(\overrightarrow{e_n}))} M_{1,n}$, where \\
	
	$ M_{1,1} = \det \begin{pmatrix} 
	g(\varphi(\overrightarrow{e_2})) & \dots & 0 & 0\\
	\dots \\
	0 & \dots & g(\varphi(\overrightarrow{e_{n-1}})) & 0 \\
	0 & \dots & -f(\varphi(\overrightarrow{e_{n-1}}))\sqrt{g(\varphi(\overrightarrow{e_{n-1}}))} & g(\varphi(\overrightarrow{e_n}))
	\end{pmatrix}$\\
	
	$M_{1,n} = \det \begin{pmatrix} 
	-f(\varphi(\overrightarrow{e_1}))\sqrt{g(\varphi(\overrightarrow{e_1}))} & g(\varphi(\overrightarrow{e_2})) & \dots & 0 \\
	\dots \\
	0 & 0 & \dots & g(\varphi(\overrightarrow{e_{n-1}}))  \\
	0 & 0 & \dots & -f(\varphi(\overrightarrow{e_{n-1}}))\sqrt{g(\varphi(\overrightarrow{e_{n-1}}))} 
	\end{pmatrix}$
	
	Clearly $M_{1,1}$ and $M_{1,n}$ are determinant of triangular matrices and hence it is the product of the diagonal entries. Thus \\
	$M_{1,1}= g(\varphi(\overrightarrow{e_2}))g(\varphi(\overrightarrow{e_3})) \dots g(\varphi(\overrightarrow{e_n}))$ and\\ 
	$M_{1,n} = (-1)^{n-1} f(\varphi(\overrightarrow{e_1}))\sqrt{g(\varphi(\overrightarrow{e_1}))}f(\varphi(\overrightarrow{e_2}))\sqrt{g(\varphi(\overrightarrow{e_2}))} \dots f(\varphi(\overrightarrow{e_{n-1}}))\sqrt{g(\varphi(\overrightarrow{e_{n-1}}))}$.\\
	Hence $\det (\mathrm{H}) = \displaystyle \prod_{\overrightarrow{e} \in E(C_n)} g(\varphi(\overrightarrow{e})) - f(\varphi(C_n))\sqrt{\displaystyle \prod_{\overrightarrow{e} \in E(C_n)} g(\varphi(\overrightarrow{e}))}.$ \\
	
	Now considering the matrix $\mathrm{H}^{\#},$\\
	 $\mathrm{H}^{\#}=\begin{pmatrix} 
	\sqrt{g(\varphi(\overrightarrow{e_1}))}^{-1}  & 0  &  \dots & 0 & -f(\varphi(\overrightarrow{e_n}))^{-1}\\
	-f(\varphi(\overrightarrow{e_1}))^{-1} & \sqrt{g(\varphi(\overrightarrow{e_2}))}^{-1} & \dots & 0 & 0\\
	\dots \\
	0 & 0 & \dots & \sqrt{g(\varphi(\overrightarrow{e_{n-1}}))}^{-1} & 0 \\
	0 & 0 & \dots & -f(\varphi(\overrightarrow{e_{n-1}}))^{-1} & \sqrt{g(\varphi(\overrightarrow{e_n}))}^{-1} 
	\end{pmatrix}.$ \\
	
	Finding its determinant in a similiar way we get 
	\begin{eqnarray*}
		\det (\mathrm{H}^{\#}) = \displaystyle \prod_{\overrightarrow{e} \in E(C_n)} \sqrt{g(\varphi(\overrightarrow{e}))}^{-1} - f(\varphi(C_n))^{-1} \\
		= \sqrt{ \displaystyle \prod_{\overrightarrow{e} \in E(C_n)} g(\varphi(\overrightarrow{e}))}^{-1} - f(\varphi(C_n))^{-1}.
	\end{eqnarray*}
	
	Now from Theorem~\ref{thm1} $L_g=\mathrm{H}\mathrm{H}^{\#}$ which gives $\det(L_g) = \det (\mathrm{H}) \det \mathrm{H}^{\#}.$
	Thus $\det L_g(\Phi_f) = 2\sqrt{\displaystyle \prod_{\overrightarrow{e} \in E(C_n)} g(\varphi(\overrightarrow{e}))} -[\varphi(C_n) + f(\varphi(C_n))].$
\end{proof}

\begin{thm}\label{unicyclic}
		If $\Phi_f =(G,F^\times,\varphi,f)$ is a skew gain graph of order $n$ where $G$ is a unicyclic graph with unique cycle $C$ then\\ $\det L_g(\Phi_f) =\sqrt{ \displaystyle \prod_{\overrightarrow{e} \notin E(C)} g(\varphi(\overrightarrow{e}))}\Big(2\sqrt{\displaystyle \prod_{\overrightarrow{e} \in E(C)} g(\varphi(\overrightarrow{e}))} -[\varphi(C) + f(\varphi(C))]\Big).$
\end{thm}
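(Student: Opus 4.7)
The plan is to exploit Theorem~\ref{thm1}: $L_g(\Phi_f)=\mathrm{H}(\Phi_f)\mathrm{H}^{\#}(\Phi_f)$, and since a unicyclic graph on $n$ vertices has exactly $n$ edges, both factors are square of order $n$, giving $\det L_g(\Phi_f)=\det\mathrm{H}\cdot\det\mathrm{H}^{\#}$. I will choose an ordering of vertices and edges that forces each factor into block triangular form, apply Lemma~\ref{lem2} to isolate the cycle contribution, and finish with the computation already carried out in the proof of Theorem~\ref{cycle}.

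For the ordering, put the $k$ cycle vertices first as $v_1,\ldots,v_k$ and the $k$ cycle edges first as $\overrightarrow{e}_1,\ldots,\overrightarrow{e}_k$. After deleting $E(C)$ the graph breaks into rooted trees hanging from $C$; list the remaining vertices $v_{k+1},\ldots,v_n$ so that every parent precedes every child, and for $i>k$ let $\overrightarrow{e}_i$ be the unique edge joining $v_i$ to its parent. Under this labeling the bottom-left $(n-k)\times k$ block of $\mathrm{H}$ vanishes, because no cycle edge is incident to a tree vertex, and the bottom-right block $T$ is upper triangular: in the row indexed by $v_i$ ($i>k$) the only nonzero entries appear in column $\overrightarrow{e}_i$ and in columns $\overrightarrow{e}_j$ with $v_j$ a child of $v_i$, and those $j$'s exceed $i$. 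Lemma~\ref{lem2} then gives $\det\mathrm{H}=\det\mathrm{H}_C\cdot\det T$, where $\mathrm{H}_C$ is the incidence matrix of the skew gain cycle. A transposed argument shows that in $\mathrm{H}^{\#}$ the top-right block vanishes and the bottom-right block $T'$ is lower triangular, so $\det\mathrm{H}^{\#}=\det\mathrm{H}_C^{\#}\cdot\det T'$.

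For each non-cycle edge $\overrightarrow{e}_i$ the pair of diagonal entries appearing in $T$ and $T'$ is $\bigl(g(\varphi(\overrightarrow{e}_i)),\,\sqrt{g(\varphi(\overrightarrow{e}_i))}^{-1}\bigr)$ if $t(\overrightarrow{e}_i)=v_i$ and $\bigl(-f(\varphi(\overrightarrow{e}_i))\sqrt{g(\varphi(\overrightarrow{e}_i))},\,-f(\varphi(\overrightarrow{e}_i))^{-1}\bigr)$ if $h(\overrightarrow{e}_i)=v_i$; in both cases the two entries multiply to $\sqrt{g(\varphi(\overrightarrow{e}_i))}$. Multiplying over the $n-k$ tree edges yields $\det T\cdot\det T'=\sqrt{\prod_{\overrightarrow{e}\notin E(C)}g(\varphi(\overrightarrow{e}))}$. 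The computation inside the proof of Theorem~\ref{cycle} already evaluates $\det\mathrm{H}_C\cdot\det\mathrm{H}_C^{\#}=2\sqrt{\prod_{\overrightarrow{e}\in E(C)}g(\varphi(\overrightarrow{e}))}-[\varphi(C)+f(\varphi(C))]$, so multiplying the cycle factor by the tree factor produces the stated expression.

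The main obstacle is verifying the block triangular structure — making certain that the parent-before-child indexing really forces $T$ to be upper triangular and $T'$ to be lower triangular — but this is a direct consequence of the rooted-forest ordering, and once it is in place the remainder of the argument is an elementary case check on the two possible orientations of each pendant-direction plus an appeal to Theorem~\ref{cycle}.
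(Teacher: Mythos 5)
Your proposal is correct and follows essentially the same route as the paper: factor $\det L_g(\Phi_f)=\det\mathrm{H}\cdot\det\mathrm{H}^{\#}$ via Theorem~\ref{thm1}, order vertices and edges so that each incidence matrix becomes block triangular with the cycle block plus a triangular tree part, apply Lemma~\ref{lem2}, and reuse the cycle computation from Theorem~\ref{cycle}. Your treatment is in fact slightly more careful than the paper's, since you verify the triangular structure of the tree blocks explicitly and handle both possible orientations of each pendant edge rather than fixing an orientation convention, but this is a refinement of the same argument, not a different one.
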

\begin{proof}
	Let $C = v_1\overrightarrow{e_1}v_2\overrightarrow{e_2} \dots v_p\overrightarrow{e_p}v_1$ be the unique cycle and define the orientation of edges as for $i<j$ the edge $\overrightarrow{e_{i,j}}$ has tail $t(\overrightarrow{e_{i,j}})=v_i$ and head $h(\overrightarrow{e_{i,j}})=v_j$. 
	We get the incidence matrix $\mathrm{H}(\Phi_f)$ as an upper triangular block martix with diagonal blocks $A_1, A_2, \dots , A_k, k=n-p+1$ where $A_1$ corresponds to the vertices and edges in the cycle $C$ and $A_i,i = 2,3,\dots n-p+1$ are one element matrices $[-f(\varphi(\overrightarrow{e}))\sqrt{g(\varphi{(\overrightarrow{e})})}]$ corresponding to the edges $\overrightarrow{e}$ not in $C$. Then, by Lemma \ref{lem2}, $\det(H(\Phi_f)) = \prod \det( A_i). $ Now using Theorem \ref{cycle},  $\det(\mathrm{H}) =\displaystyle \prod_{\overrightarrow{e} \notin E(C)}(-f(\varphi(\overrightarrow{e}))(\sqrt{g(\varphi{(\overrightarrow{e})})})\big(\displaystyle \prod_{\overrightarrow{e} \in E(C)} g(\varphi(\overrightarrow{e})) - f(\varphi(C))\sqrt{\displaystyle \prod_{\overrightarrow{e} \in E(C)} g(\varphi(\overrightarrow{e}))}\big). $\\
	
	Similiarly we get \\ $\det(\mathrm{H^{\#}}) =\displaystyle \prod_{\overrightarrow{e} \notin E(C)}(-f(\varphi(\overrightarrow{e}))^{-1})\big( \sqrt{ \displaystyle \prod_{\overrightarrow{e} \in E(C)} g(\varphi(\overrightarrow{e}))}^{-1} - f(\varphi(C))^{-1} \big). $\\
	
	Since $\det L_g(\Phi_f) =\det {\mathrm{H}\mathrm{H}^{\#}}$ we get, \\ $\det L_g(\Phi_f) =\sqrt{ \displaystyle \prod_{\overrightarrow{e} \notin E(C)} g(\varphi(\overrightarrow{e}))}\Big(2\sqrt{\displaystyle \prod_{\overrightarrow{e} \in E(C)} g(\varphi(\overrightarrow{e}))} -[\varphi(C) + f(\varphi(C))]\Big).$
\end{proof}

A $1$-tree is a connected unicyclic graph and a $1$-forest is a disjoint union of $1$-trees. A spanning subgraph of $G$ which is a $1$-forest is called as an essential spanning subgraph of $G$. We denote the collection of all essential spanning subgraphs of $G$ by $\mathfrak{E}(G)$

\begin{thm}\label{forest}
	If $\Phi_f =(G,F^\times,\varphi,f)$ is a skew gain graph where $G$ is a $1$-forest, then $\det L_g(\Phi_f) =\displaystyle \prod_{\Psi \in G} \sqrt{ \displaystyle \prod_{\overrightarrow{e} \notin C_\Psi} g(\varphi(\overrightarrow{e}))}\Big(2\sqrt{\displaystyle \prod_{\overrightarrow{e} \in C_\Psi} g(\varphi(\overrightarrow{e}))} -[\varphi(C_\Psi) + f(\varphi(C_\Psi))]\Big)$ where the product runs over all component $1$-trees  $\Psi$ having unique cycle $C_\Psi$.
\end{thm}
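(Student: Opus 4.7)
The plan is to reduce the computation to the unicyclic case already handled in Theorem~\ref{unicyclic} by exploiting the block structure that arises from the disconnectedness of a $1$-forest. Since $G$ is a disjoint union of its component $1$-trees $\Psi_1,\Psi_2,\dots,\Psi_k$, no edge joins vertices belonging to different components. Ordering the vertices component by component and ordering the edges the same way, the incidence matrix $\mathrm{H}(\Phi_f)$ of Definition~\ref{incid} acquires a block diagonal form
\[
\mathrm{H}(\Phi_f)=\diag\bigl(\mathrm{H}(\Phi_f|_{\Psi_1}),\,\mathrm{H}(\Phi_f|_{\Psi_2}),\,\dots,\,\mathrm{H}(\Phi_f|_{\Psi_k})\bigr),
\]
because a nonzero entry $b_{ij}$ requires the edge $\overrightarrow{e_j}$ to be incident to the vertex $v_i$, forcing both to lie in the same component. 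The same reasoning applies verbatim to $\mathrm{H}^{\#}(\Phi_f)$, since it is obtained from $\mathrm{H}(\Phi_f)$ by a purely entrywise operation followed by transposition, preserving the block pattern.

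Next I would apply Theorem~\ref{thm1} to write $L_g(\Phi_f)=\mathrm{H}(\Phi_f)\mathrm{H}^{\#}(\Phi_f)$. Multiplying two block diagonal matrices with matching block partitions produces a block diagonal matrix whose blocks are the pairwise products, that is,
\[
L_g(\Phi_f)=\diag\bigl(\mathrm{H}(\Phi_f|_{\Psi_1})\mathrm{H}^{\#}(\Phi_f|_{\Psi_1}),\,\dots,\,\mathrm{H}(\Phi_f|_{\Psi_k})\mathrm{H}^{\#}(\Phi_f|_{\Psi_k})\bigr)=\diag\bigl(L_g(\Phi_f|_{\Psi_1}),\dots,L_g(\Phi_f|_{\Psi_k})\bigr).
\]
By Lemma~\ref{lem2}, the determinant of a block diagonal (hence block triangular) matrix is the product of the determinants of its diagonal blocks, giving $\det L_g(\Phi_f)=\prod_{\Psi}\det L_g(\Phi_f|_{\Psi})$ where $\Psi$ ranges over the components of $G$.

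Finally, each component $\Psi$ is a $1$-tree, i.e.\ a connected unicyclic graph with its own unique cycle $C_\Psi$, so Theorem~\ref{unicyclic} applies to $\Phi_f|_{\Psi}$ and yields
\[
\det L_g(\Phi_f|_{\Psi})=\sqrt{\prod_{\overrightarrow{e}\notin E(C_\Psi)} g(\varphi(\overrightarrow{e}))}\,\Bigl(2\sqrt{\prod_{\overrightarrow{e}\in E(C_\Psi)} g(\varphi(\overrightarrow{e}))}-[\varphi(C_\Psi)+f(\varphi(C_\Psi))]\Bigr),
\]
where the products inside the $\Psi$-contribution are taken over edges of $\Psi$. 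Taking the product of these expressions over all $\Psi$ yields the claimed formula. The only nontrivial point — and the step I would be most careful about — is justifying the block decomposition cleanly: one must verify that both $\mathrm{H}(\Phi_f)$ and $\mathrm{H}^{\#}(\Phi_f)$ decompose along the \emph{same} row/column partition so that the product $\mathrm{H}\mathrm{H}^{\#}$ respects it. This follows from the observation above that any nonzero column of $\mathrm{H}(\Phi_f)$ (indexed by an edge $\overrightarrow{e_j}$) has its two nonzero entries in the rows indexed by the two endpoints of $\overrightarrow{e_j}$, hence within a single component; the rest of the argument is an application of earlier results.
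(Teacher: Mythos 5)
Your proposal is correct and takes essentially the same approach as the paper: reorder vertices and edges so that the matrix splits into blocks indexed by the $1$-tree components, apply Lemma~\ref{lem2} to get the determinant as a product over blocks, and evaluate each block by Theorem~\ref{unicyclic}. The only cosmetic difference is that you derive the block-diagonal structure of $L_g(\Phi_f)$ through the factorization $\mathrm{H}(\Phi_f)\mathrm{H}^{\#}(\Phi_f)$ of Theorem~\ref{thm1}, while the paper simply observes that $L_g(\Phi_f)$ itself is block diagonal because no edge joins distinct components.
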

\begin{proof}
	By suitable reordering of vertices and edges, if necessary, we can make the matrix $L_g(\Phi_f)$ as a block diagonal matrix where the blocks corresponds to the $1$-tree components of the $1$-forest. Then, by Lemma \ref{lem2}, determinant $\det(L_g(\Phi_f)) = \displaystyle \prod_{\Psi \in \mathfrak{E}(G)} \det(L_g(\Psi)).$ Now by applying Theorem \ref{unicyclic} we get \\ $\det L_g(\Phi_f) =\displaystyle \prod_{\Psi \in \mathfrak{E}(G)} \sqrt{ \displaystyle \prod_{\overrightarrow{e} \notin C_\Psi} g(\varphi(\overrightarrow{e}))}\Big(2\sqrt{\displaystyle \prod_{\overrightarrow{e} \in C_\Psi} g(\varphi(\overrightarrow{e}))} -[\varphi(C_\Psi) + f(\varphi(C_\Psi))]\Big).$
\end{proof}

Now we can prove the matrix - tree theorem for skew gain graphs.

\begin{lem}\label{lem3}
	Let  $\Phi_f =(G,F^\times,\varphi,f)$ be a skew gain graph on $n$ vertices and $\Psi $ be a spanning subgraph of $\Phi_f$ having exactly $n$ edges. Then $\det(L_g(\Psi )) \neq 0$ implies $\Psi $ is an essential spanning subgraph of $\Phi_f.$
\end{lem}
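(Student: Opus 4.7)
The plan is to use the contrapositive together with the block structure of $L_g(\Psi)$. Let $\Psi$ be a spanning subgraph of $\Phi_f$ on the vertex set $V(\Phi_f)$ with exactly $n$ edges, and let its connected components be $\Psi_1, \Psi_2, \dots, \Psi_s$ with $|V(\Psi_i)|=n_i$ and $|E(\Psi_i)|=m_i$. After a suitable reordering of the vertex labels (and the corresponding edge labels), $L_g(\Psi)$ becomes block diagonal with blocks $L_g(\Psi_1), \dots, L_g(\Psi_s)$, so by Lemma \ref{lem2} one has $\det L_g(\Psi)=\prod_{i=1}^{s}\det L_g(\Psi_i)$. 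Hence $\det L_g(\Psi)\neq 0$ forces $\det L_g(\Psi_i)\neq 0$ for every $i$.

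The key combinatorial observation is an edge-count. Since each $\Psi_i$ is connected, $m_i\geq n_i-1$. Setting $k_i:=m_i-n_i+1\geq 0$ (the cyclomatic number of $\Psi_i$), and using $\sum n_i=n=\sum m_i$, we get
\begin{equation*}
\sum_{i=1}^{s}k_i \;=\; \sum_{i=1}^{s}(m_i-n_i+1) \;=\; n-n+s \;=\; s.
\end{equation*}
Thus the $k_i$ are non-negative integers summing to $s$, so either every $k_i$ equals $1$, or some $k_i=0$ (forcing some $k_j\geq 2$).

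Now I would invoke Theorem \ref{tree}. If some component $\Psi_i$ is a tree (i.e.\ $k_i=0$), then $\det L_g(\Psi_i)=0$, which contradicts $\det L_g(\Psi)\neq 0$. Hence $k_i\geq 1$ for every $i$, and combined with $\sum k_i=s$ this forces $k_i=1$ for every $i$. Consequently each $\Psi_i$ is a connected unicyclic graph, i.e.\ a $1$-tree, and therefore $\Psi$ is a $1$-forest and so an essential spanning subgraph of $\Phi_f$.

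The only genuinely non-routine step is the edge-count argument together with the dichotomy it produces: once one notices that $n$ vertices and $n$ edges distributed over $s$ components must place cyclomatic budget exactly $s$ among the components, the rest follows from Theorem \ref{tree} and the block-diagonal structure. No delicate analysis of $L_g$ on components with $k_i\geq 2$ is needed, because the trees are already enough to kill the determinant via Theorem \ref{tree}.
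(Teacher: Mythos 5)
Your proof is correct and follows essentially the same route as the paper: block-diagonalize $L_g(\Psi)$ over the components via Lemma \ref{lem2}, use Theorem \ref{tree} to rule out tree components, and force each component to have equally many vertices and edges by a counting argument. Your cyclomatic-number bookkeeping ($\sum_i k_i = s$ with $k_i \geq 0$) is a slightly cleaner packaging of the paper's complement-counting claim, and it also absorbs the isolated-vertex case (a trivial tree, whose $1\times 1$ block is zero) which the paper treats separately.
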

\begin{proof}
	Let $\Psi $ be a spanning subgraph of $\Phi_f$ having exactly $n$ edges and let  $\det(L_g(\Psi )) \neq 0$. We have to prove $\Psi $ is an essential spanning subgraph of $\Phi_f.$ That is, we have to prove that the components of $\Psi $ are $1$-trees. \\
	By suitable ordering of vertices and edges, we can make the matrix $L_g(\Psi )$ as a block diagonal matrix $\diag(A_i)$ where the blocks $A_i$ corresponds to the components of $\Psi$. Thus, $\det(L_g(\Psi)) = \displaystyle \prod_{A_i \in \Psi} \det(L_g(A_i)).$\\
	If $\Psi $ contains an isolated vertex, then the matrix $L_g(\Psi)$ has a zero row which implies $\det(L_g(\Psi)) = 0,$ a contradiction.\\
	If $A_i$ is a tree for some $i,$ then by Theorem \ref{tree} we get $\det(L_g(A_i)) = 0$ which implies $\det(L_g(\Psi)) =0,$ again a contradiction.\\
	Claim: If $A_k$ is a component of $\Psi$ then $A_k$ have same number of edges and vertices.\\
	Suppose $A_k$, for some $k,$ has $p$ vertices and $p+t$ edges where $t \geq 1.$ Then the $n-p$ vertices and $n-p-t$ edges not in $A_k$ forms either a tree or a disconnected graph having trees as components. Both cases leads to $\det(L_g(\Psi)) =0,$ a contradiction.
	Hence our claim.\\
	Now all the components of $\Psi$ have same number of edges and vertices implies the components of $\Psi$ are $1$-trees. Hence $\Psi $ is a spanning $1$-forest. That is  $\Psi $ is an essential spanning subgraph of $\Phi_f$.	
\end{proof}

\begin{thm}
	If $\Phi_f =(G,F^\times,\varphi,f)$ is a skew gain graph on $n$ vertices, then \\
	$\det(L_g(\Phi_f) = \displaystyle \sum_{\Psi \in \mathfrak{E}(G)}\displaystyle \prod_{\psi \in \Psi}   \sqrt{ \displaystyle \prod_{\overrightarrow{e} \notin C_\psi} g(\varphi(\overrightarrow{e}))}\Big(2\sqrt{\displaystyle \prod_{\overrightarrow{e} \in C_{\psi}} g(\varphi(\overrightarrow{e}))} -[\varphi(C_{\psi}) + f(\varphi(C_{\psi}))]\Big)$  where the summation runs over all essential spanning subgraphs $\Psi$ of $\Phi_f$ and $\psi \in \Psi$ denotes the component $1$-trees $\psi$ in the spanning $1$-forest $\Psi.$
\end{thm}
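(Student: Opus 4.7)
The plan is to factor $L_g(\Phi_f)$ through the incidence matrix using Theorem~\ref{thm1} and then apply the Cauchy--Binet formula, in the spirit of the classical Kirchhoff matrix--tree theorem; the spanning subgraphs that survive will be precisely the essential spanning subgraphs picked out by Lemma~\ref{lem3}, and each surviving term will already be evaluated in closed form by Theorem~\ref{forest}.

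First I would invoke Theorem~\ref{thm1} to write $L_g(\Phi_f)=\mathrm{H}(\Phi_f)\mathrm{H}^{\#}(\Phi_f)$, where $\mathrm{H}(\Phi_f)$ is $n\times m$ and $\mathrm{H}^{\#}(\Phi_f)$ is $m\times n$ with $m=|E(G)|$. The Cauchy--Binet formula then gives
\begin{equation*}
\det(L_g(\Phi_f))=\sum_{S}\det(\mathrm{H}_S)\det(\mathrm{H}^{\#}_S),
\end{equation*}
where the sum ranges over $n$-element subsets $S$ of $E(G)$, $\mathrm{H}_S$ consists of the columns of $\mathrm{H}(\Phi_f)$ indexed by $S$, and $\mathrm{H}^{\#}_S$ consists of the rows of $\mathrm{H}^{\#}(\Phi_f)$ indexed by $S$. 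Each such $S$ determines a spanning subgraph $\Psi_S=(V(G),S)$ with exactly $n$ edges, and because the operation $\#$ in Definition~\ref{incid} acts entrywise column by column before transposing, one checks directly that $\mathrm{H}_S=\mathrm{H}(\Psi_S)$ and $\mathrm{H}^{\#}_S=\mathrm{H}(\Psi_S)^{\#}$. Both factors are now $n\times n$, so applying Theorem~\ref{thm1} to $\Psi_S$ itself yields
$\det(\mathrm{H}_S)\det(\mathrm{H}^{\#}_S)=\det\bigl(\mathrm{H}(\Psi_S)\mathrm{H}(\Psi_S)^{\#}\bigr)=\det(L_g(\Psi_S)).$

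Next I would use Lemma~\ref{lem3} to discard every term with $\Psi_S\notin\mathfrak{E}(G)$, since such terms contribute zero. For the remaining essential spanning subgraphs $\Psi\in\mathfrak{E}(G)$, Theorem~\ref{forest} evaluates $\det(L_g(\Psi))$ as a product over the component $1$-trees $\psi\in\Psi$, each factor being $\sqrt{\prod_{\overrightarrow{e}\notin C_\psi}g(\varphi(\overrightarrow{e}))}$ multiplied by $\bigl(2\sqrt{\prod_{\overrightarrow{e}\in C_\psi}g(\varphi(\overrightarrow{e}))}-[\varphi(C_\psi)+f(\varphi(C_\psi))]\bigr)$. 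This is precisely the summand in the statement, so summing over $\Psi\in\mathfrak{E}(G)$ produces the claimed identity.

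The main obstacle I anticipate is the bookkeeping surrounding the $\#$ operation inside the Cauchy--Binet step: one must verify carefully that the replacement rules defining $\mathrm{H}^{\#}$ commute with restriction to an edge subset, so that the identity $L_g(\Psi_S)=\mathrm{H}(\Psi_S)\mathrm{H}(\Psi_S)^{\#}$ genuinely drops out of each Cauchy--Binet block. This is encoded in the column-wise entry rules of Definition~\ref{incid} and so reduces to a routine check, but a mismatch in conventions (for instance in treating the $-f(\varphi(\overrightarrow{e_j}))\sqrt{g(\varphi(\overrightarrow{e_j}))}$ factor and its image under $\#$) would break the factorization, so I would confirm this first. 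Once it is in hand, the remainder of the argument is the standard matrix--tree template applied to Theorems~\ref{thm1}, \ref{forest} and Lemma~\ref{lem3}.
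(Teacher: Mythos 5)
Your proposal is correct and follows essentially the same route as the paper: factor $L_g(\Phi_f)=\mathrm{H}(\Phi_f)\mathrm{H}^{\#}(\Phi_f)$ via Theorem~\ref{thm1}, apply Cauchy--Binet over $n$-edge spanning subgraphs, discard the non-essential ones by Lemma~\ref{lem3}, and evaluate the surviving terms with Theorem~\ref{forest}. Your explicit check that restriction to an edge subset commutes with the $\#$ operation, so that each Cauchy--Binet block is genuinely $\det(L_g(\Psi_S))$, is a point the paper leaves implicit, but it is the same argument.
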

\begin{proof}
	Since $L_g(\Phi_f)=\mathrm{H}(\Phi_f)\mathrm{H}^{\#}(\Phi_f)$, by Binet-Cauchy theorem \cite{jgb} we get,\\
	$\det(L_g(\Phi_f)) =  \displaystyle \sum_J \det(\mathrm{H}(J))\det(\mathrm{H^{\#}}(J)) = \displaystyle \sum_J\det{L_g(J)}$ \\where $J$ is a spanning subgraph of $G$ with exactly $n$ edges. Then by Lemma \ref{lem3} we get
	$\det(L_g(\Phi_f)) =  \displaystyle \sum_{\Psi \in \mathfrak{E}(G)}\det{L_g({\Psi})}$ where the summation runs over all essential spanning subgraphs of $\Phi_f.$ Hence by Theorem \ref{forest} we get the \\
	\begin{equation*}
	\det(L_g(\Phi_f)) = \displaystyle \sum_{\Psi \in \mathfrak{E}(G)}\displaystyle \prod_{\psi \in \Psi}   \sqrt{ \displaystyle \prod_{\overrightarrow{e} \notin C_\psi} g(\varphi(\overrightarrow{e}))}\Big(2\sqrt{\displaystyle \prod_{\overrightarrow{e} \in C_{\psi}} g(\varphi(\overrightarrow{e}))} -[\varphi(C_{\psi}) + f(\varphi(C_{\psi}))]\Big).
	\end{equation*}
\end{proof}

\section*{Acknowledgement}
The first author would like to acknowledge her gratitude to Department of Science and Technology, Govt. of India for the financial support under INSPIRE Fellowship scheme Reg No: IF180462.

\section*{References}
\begin{enumerate}
	\bibitem{nb} N.Biggs, {\bf Algebraic Graph Theory},  Cambrige University Press, Cambridge (1974).
	\bibitem{jgb} J. G. Broida, S. G. Williamson, {\bf Comprehensive Introduction to Linear Algebra},  Addison Wesley, Redwood City (1989).
	\bibitem{sc}S. Chaiken, A combinatorial proof of the all minors matrix tree theorem. SIAM J. Algebraic Discrete Methods, 3 (1982), 319-329. 
	\bibitem{jh} J.Hage and T. Harju, T, The size of switching classes with skew gains. Discrete Math., 215 (2000), 81-92.
	\bibitem{fh} F. Harary, {\bf Graph Theory},  Addison Wesley, Reading Massachusetts (1969).
	\bibitem{rm} R. Merris, Laplacian matrices of graphs: a survey. Linear Algebra and its Applications, 197-198 (1994), 143-176.
	\bibitem{shkg} Shahul Hameed K and K. A. Germina, Balance in gain graphs--A spectral analysis. Linear Algebra and its Applications, 436 (2012), 1114--1121.
	\bibitem{sh} Shahul Hameed K, Roshni T Roy, Soorya P and K A Germina, On the Characteristic Polynomial of Skew Gain Graphs, communicated (2020).
	\bibitem{tz} T.\ Zaslavsky, Biased Graphs. I. Bias, Balance, and Gains. Journal of Combinatorial Theory, Series B 47 (1989), 32-52. 
	\bibitem{tz2} T.\ Zaslavsky, Signed graphs. Discrete Appl. Math., 4 (1982), 47-74.
\end{enumerate}
\end{document}